\newcommand{\En}{\mathop{\rm En}}
\newcommand{\Con}{\mathop{\rm Con}}
\newcommand{\Cut}{\mathop{\rm Cut}}
\newcommand{\Comp}{\mathop{\rm Comp~}}
\newcommand{\Ls}{\mathop{\rm Ls}}
\newcommand{\IB}{\mathop{\rm IB}}
\newcommand{\Int}{\mathop{\rm Int}}
\title{On the dynamics of subcontinua of a tree}
\author{Mykola Matviichuk}
\address{{National University of Kyiv, \newline Academician Glushkov ave., 2, build.7, 03127, Kyiv, Ukraine, and}\newline
        {Institute of Mathematics of NASU, \newline  Tereschenkivs'ka 3, 01601 Kyiv,
Ukraine}} \email{mykola.matviichuk@gmail.com}
\newtheorem{theorem}{Theorem}
\newtheorem{lemma}{Lemma}
\newtheorem{remark}{Remark}
\begin{document}

\begin{abstract}
Given a tree map $f:T\to T$, we study the dynamics of subcontinua
of $T$ under action of $f$. In particular, we prove that a
subcontinuum of $T$ is either asymptotically periodic or
asymptotically degenerate. As an application of this result, we
show that zero topological entropy of the system $(T,f)$ implies
zero topological entropy of its functional envelope (endowed with
the Hausdorff metric).
\end{abstract}

\maketitle

\section{Introduction}

By a \emph{(topological) dynamical system} we mean a pair $(X,f)$
where $X$ is a compact metrizable topological space and $f:X\to X$
is a map, i.e. continuous function. Recall that a \emph{continuum}
is a nonempty compact connected metric space. Given a dynamical
system $(X,f)$, one can in a natural way extend $f$ to a map
$\mathcal{F}$ on the hyperspace $\Con(X)$ of all subcontinua of
$X$. We call the system $(\Con(X),\mathcal{F})$ a \emph{connected
envelope} (where $\Con(X)$ is endowed with the topology induced by
the Hausdorff metric). The natural question arises here: what is
the connection between dynamical properties of the base map $f$
and its extension $\mathcal{F}$. For papers related to this topic,
see \cite{AIM-L}, \cite{Banks}, \cite{FRSh}, \cite{KwOp}.

In the present paper we deal with the case when underlying phase
space is a tree. At the end of the paper we will prove
(Theorem~\ref{Tm:EntrCon}) the equality of topological entropies
of a dynamical system on a tree and its connected envelope. As a
consequence, we will get a nice result concerning a system which a
dynamical system on a tree induces on the hyperspace of all maps
on this tree endowed with the Hausdorff metric; following
\cite{AKoS} we call it a \emph{functional envelope}. Namely, we
prove (Theorem~\ref{Tm:EntrFun}) that if a system on a tree has
zero topological entropy, then so does its functional envelope
(cf. with result due to Glasner and Weiss \cite{GW} who proved
that zero entropy of any topological dynamical system implies zero
entropy of the system induced on the space of all probability
Borel measures on the phase space). For the case of interval both
these results were done in \cite{Matv1}.

In order to prove the mentioned results we study the dynamics of a
subcontinuum of a tree under action of a tree map. First, in
Section~\ref{WihtPer}, we consider the situation when the
subcontinuum contains a periodic point of the map. In \cite{Fed}
it was proved that if a subinterval of an interval contains a
periodic point of an interval map, then it is asymptotically
periodic with respect to this map. We prove (see
Theorem~\ref{Th:PerToAsPer}) the generalization of this result for
tree maps, i.e. we prove that each subcontinuum of a tree
containing a periodic point of a tree map is asymptotically
periodic with respect to the map. Unfortunately, our method does
not provide a good estimate of period of the asymptotically
periodic set. For the case of interval such an estimate is known;
namely, the period of the set is a divisor of doubled period of
each periodic point it contains \cite{Fed}.

Next, in Section~\ref{WithoutPer}, we consider in some sense the
opposite situation, when only the endpoints of a tree are
permitted to be periodic. Recall that, by the fixed point
property, it must have at least one of them. It turns out that in
this setting there is a unique attracting fixed point which
attracts everything which does not eventually glue to a periodic
orbit (see Lemmas~\ref{Lm:UnAttr} and \ref{Lm:PointAsPer}). As a
consequence, we get that any subcontinuum of the tree converges to
the attracting fixed point, provided that it does not glue to a
periodic orbit; and if it does, then, by previous results, it is
asymptotically periodic (see Theorem~\ref{Th:NoPerToAsPer} and the
proof).

Finally, in Section~\ref{MainAndEntropy}, we prove that any
subcontinuum of a tree when it is iterated under a tree map is
either asymptotically periodic or asymptotically degenerate, or
both (see Theorem~\ref{Tm:Dicho}). For interval maps such a
characterization was known (see for instance \cite{Matv1}) and for
transitive graph maps similar result was recently proved in
\cite{KwOp}. Still for general graph maps the situation is
unclear. We finish the paper with the above-mentioned result that
zero entropy of a tree dynamical system implies zero entropy of
its functional envelope. We remark that this phenomenon is
essentially due to dimension one. There are quite simple examples
of zero entropy maps on the square for which the functional
envelope has infinite entropy (e.g. $f(x,y)=(x,y^2)$,
$(x,y)\in[0,1]^2$ works). So, the following open question seems to
be quite natural here.

\emph{Question.} Does Theorem~\ref{Tm:EntrFun} remain true for a)
graphs with loops, b) dendrites?

\section{The dynamics of a subcontinuum of a tree containing a periodic
point.}\label{WihtPer}

First, let us recall some definitions and fix notations. By an
\emph{interval} we mean any space homeomorphic to $[0,1]\subset
\mathbb{R}$. A \emph{tree} is a uniquely arcwise connected space
that is either a point or a union of finitely many intervals.
Remark that any tree is a continuum. Any continuous function from
a tree into itself is called a \emph{tree map}. If $T$ is a tree
and $x\in T$, we define the \emph{valence} of $x$ to be the number
of connected components of $T\setminus\{x\}$. Each point of
valence one will be called an \emph{endpoint} of $T$ and the set
of such points will be denoted by $\En(T)$. A point of valence
greater than one will be called a \emph{cut-point} and the set of
cut-points of $T$ will be denoted by $\Cut(T)$. A point of valence
different from two will be called a \emph{vertex} of $T$, and the
set of vertices of $T$ will be denoted by $V(T)$. The closure of
each connected component of $T\setminus V(T)$ will be called an
\emph{edge} of $T$.

If $(X,f)$ is a dynamical system and $x\in X$ then the
\emph{$\omega$-limit set} of $x$ under $f$ is the set
$\omega_f(x)$ of all limit points of the trajectory
$x,f(x),f^2(x),\dots$ regarding it as a sequence. Given a subset
$A$ of a topological space, we denote by $\overline{A}$, $\Int(A)$
and $\partial A$ the closure, the interior and the boundary of
$A$, respectively. Moreover, for $x\in X$ we will denote by
$\Comp(A,x)$ the (connected) component of $A$ containing $x$ if
$x\in A$, and the singleton $\{x\}$ if $x\notin  A$. For a finite
set $B$ we will denote its cardinality by $|B|$.

Let us summarize some simple topological facts we will need. Let
$T$ be a tree, $M$ be a subcontinuum of $T$ and $A$, $A_n, n\ge 0$
be connected subsets of $T$. Then the following holds.
\begin{itemize}
    \item $M$ is a tree. Also the factor space $T/_M$ (i.e. we just
    identify all points within $M$) is a tree.
    \item The set $\partial A$  is finite.
    \item Each point in $\overline{A}\setminus A$ is an endpoint of
    $\overline{A}$.
    \item The set $\overline{\Comp(T\setminus
    A, x)}\cap\overline{A}$ is a singleton for each $x\in T$.
    \item If $A_n\cap A_{n+1}\not=\emptyset$ for each $n\ge0$, then
    $\cup_{n=0}^{\infty} A_n$ is again a connected set.
    \item The set  $\cap_{n=0}^{\infty} A_n$ is either connected
    or empty.
\end{itemize}

Given a tree $T$, a sequence $\{x_n\}_{n=0}^{\infty}\subseteq T$
is said to be \emph{consistent with $x\in T$} if
$x_m\in\Comp(T\setminus\{x_n\},x)$ whenever $m>n\ge 0$. Of course,
a sequence  $\{x_n\}_{n=0}^{\infty}$ which is consistent with some
$x$ does not need to be convergent; consider the example
$T=[-1,1]$, $x=0$ and $x_n=(-1)^n\cdot\dfrac{n+1}{2n}$, $n\ge 1$.
However, as it is in the example, one can always split the
sequence into a finite number of convergent (and, in some sense,
monotone) subsequences.

\begin{lemma}\label{lm:consistent seq}
Let $T$ be a tree, $x\in T$ and $\{x_n\}_{n=0}^{\infty}\subseteq
T$ be a sequence consistent with $x$. Then there is a finite
partition of the set of nonnegative integers into the sets
$L_1,L_2,\dots,L_k$ such that $[x,x_m]\subseteq [x,x_n]$ whenever
$m>n$ and $m,n\in L_i$ for some $1\le i\le k$. In particular, each
subsequence $\{x_n\}_{n\in L_i}$, $1\le i\le k$ is convergent.
\end{lemma}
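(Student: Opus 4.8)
The plan is to exhibit an explicit finite partition and to verify the nesting condition block by block, using consistency directly; no induction on the structure of the tree is needed. First I would take the common refinement of two finite partitions of the set of nonnegative integers. The first groups the indices according to which component of $T\setminus\{x\}$ the point $x_n$ lies in, with the indices satisfying $x_n=x$ collected into a single extra block (by consistency these form a terminal segment, since $\Comp(T\setminus\{x\},x)=\{x\}$ forces $x_m=x$ for all $m>n$ as soon as $x_n=x$); there are finitely many such blocks because $x$ has finite valence. The second groups the indices according to the edge carrying $x_n$: one block for each vertex $w\in V(T)$ collecting the $n$ with $x_n=w$, and one block for each open edge $\mathring e$ collecting the $n$ with $x_n\in\mathring e$. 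Both partitions are finite, hence so is their common refinement, whose blocks I take to be $L_1,\dots,L_k$.

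Next, fix a block $L_i$ and take $m>n$ in $L_i$. If the block consists of indices with a common value $x_n=x_m=w$ (a vertex, or $x$), then $[x,x_m]=[x,x_n]$ trivially. Otherwise all points of the block lie in one open edge $\mathring e$ and in one component $C$ of $T\setminus\{x\}$. The key observation is that, since $T$ is uniquely arcwise connected and $C$ is fixed, the arc from $x$ to any point of $\mathring e\cap C$ meets $\overline e$ first in one and the same endpoint $a$ of $e$, namely the endpoint separating $x$ from $\mathring e$ (when $x$ is an endpoint of $e$, or lies on the $C$-side, one simply has $a=x$). Thus $[x,y]=[x,a]\cup[a,y]$ for every $y\in\mathring e\cap C$, with $[x,a]$ independent of $y$. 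Now I apply consistency: removing $x_n$ from $T$ cuts $\overline e$ into the subarc from $a$ up to (but not including) $x_n$, which stays in the component of $x$, and the complementary subarc, which is severed from $x$. Since $x_m\in\Comp(T\setminus\{x_n\},x)$ and $x_m\in\mathring e$, the point $x_m$ must lie between $a$ and $x_n$, whence $[a,x_m]\subseteq[a,x_n]$ and therefore $[x,x_m]\subseteq[x,x_n]$, which is exactly the required nesting.

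Once the partition is in hand the final assertion is immediate. Within a block $L_i$ with least index $n_0$, every $x_m$ with $m\in L_i$ lies on the arc $[x,x_{n_0}]$; under a homeomorphism $[x,x_{n_0}]\cong[0,1]$ carrying $x$ to $0$, the points $x_m$ correspond to a nonincreasing bounded sequence of parameters, which converges, so $\{x_n\}_{n\in L_i}$ converges.

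I expect the crux to be the ``single entry endpoint'' claim, and in particular its one genuinely exceptional configuration, when $x$ lies in the interior of the edge $\overline e$. In that case the two sides of $\mathring e$ belong to two different components of $T\setminus\{x\}$, the arcs into the two sides share only $\{x\}$, and the naive edge partition would wrongly merge non-nested points; this is precisely the oscillating behaviour of the example $T=[-1,1]$, $x=0$. It is the refinement by components of $T\setminus\{x\}$ that repairs this, guaranteeing that within each block the arc from $x$ approaches $\overline e$ from a single fixed side, so that $a$ is well defined and the consistency computation above applies uniformly. Verifying that $a$ is well defined in the remaining configurations ($x\notin\overline e$, and $x$ an endpoint of $e$) is a routine consequence of unique arcwise connectedness together with the listed fact that $\overline{\Comp(T\setminus A,x)}\cap\overline A$ is a singleton.
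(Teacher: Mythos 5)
Your argument is correct, but it uses a genuinely different decomposition from the paper's, and it is worth comparing the two. The paper's proof is a one-liner: enumerate the endpoints $y_1,\dots,y_k$ of $T$, write $T=\cup_{i=1}^{k}[x,y_i]$, and let $L_i$ collect the indices $n$ with $x_n\in[x,y_i]$ but $x_n\notin[x,y_j]$ for $j<i$. Since each block then lies, by construction, on a single arc having $x$ as an endpoint, consistency immediately forces $x_m\in[x,x_n)$ whenever $m>n$ lie in the same block, and nothing else needs to be checked. Your partition (edges of $T$ refined by the components of $T\setminus\{x\}$) rests on the same underlying mechanism --- confine each block to one arc emanating from $x$, then read consistency as monotonicity along that arc --- but you pay for it with the entry-point analysis (well-definedness of $a$) and the case distinctions at vertices, at $x$ itself, and when $x$ is interior to an edge. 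Note that the oscillation problem you correctly isolate is resolved automatically in the paper's scheme: when $x$ lies in the interior of an edge, the two sides of that edge sit on different branch arcs $[x,y_i]$ and $[x,y_j]$, so the first-fit assignment never merges them, whereas your scheme needs the component refinement precisely for this configuration. What your version buys is a canonical partition, independent of any enumeration or tie-breaking rule, with blocks of transparent geometric meaning; what the paper's buys is brevity, since no entry point and no case analysis are required. One small precision issue in your write-up: in the key sentence, the inference that $x_m$ lies between $a$ and $x_n$ uses, in the case $x\in\Int(e)$, not only $x_m\in\Comp(T\setminus\{x_n\},x)$ and $x_m\in e$ but also $x_m\in C$ (otherwise $x_m$ could sit on the far side of $x$, which is \emph{not} severed by removing $x_n$); you acknowledge this in your closing paragraph, but it belongs in the step itself rather than as a postscript.
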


\begin{proof} The proof is straightforward. We just express $T$ as
the union $\cup_{i=1}^{k}[x,y_i]$ where $y_1,y_2,\dots,y_k$ is an
enumeration of all endpoints of $T$, and then define each $L_i$ to
be the set of those indices $n$ for which $x_n\in[x,y_i]$ but
$x_n\notin[x,y_j]$ for any $j<i$.
\end{proof}

Given a metric space $X$, we denote by $\Con(X)$ the space of all
subcontinua of $X$ endowed with the following topology. For a
sequence $\{A_n\}_{n=0}^{\infty}\subset\Con(X)$ we define:
\begin{eqnarray*}
\liminf A_n &=\{&x\in X : \text{if}~ U ~\text{is an open subset
of}~ X~ \text{with}~ U\ni x, \\&& \text{then}~ U\cap A_i
\not=\emptyset ~\text{for all but finitely many}~ n\};
\end{eqnarray*}
\begin{eqnarray*}
\limsup A_n &=\{&x\in X : \text{if}~ U ~\text{is an open subset
of}~ X~ \text{with}~ U\ni x, \\&&\text{then}~ U\cap A_i
\not=\emptyset ~\text{for infinitely many}~ n\}.
\end{eqnarray*}

In fact, $\liminf A_n, \limsup A_n\in\Con(X)$ and $\liminf
A_n\subseteq\limsup A_n$. If~$\liminf A_n$ $ = A = \limsup A_n$,
then we say that $\{A_n\}_{n=0}^{\infty}$ \emph{converges to} $A$
\emph{as} $n\to\infty$, written $A_n\to A$, $n\to\infty$. It is
well known that this convergence defines a topology on $\Con(X)$,
and $\Con(X)$ endowed with this topology is a compact metrizable
topological space. In fact, this topology is given by the
Hausdorff metric, which we will define later when we need it
explicitly.

The following easy lemma shows that, given a tree $T$ and
$M\in\Con(T)$, convergence in the space $\Con(T)$ is given by
convergence in the spaces $\Con(M)$ and $\Con(T/_M)$. Denote by
$\pi_M$ the canonical projection $T\to T/_M$.

\begin{lemma}\label{lm:1} Let $T$ be a tree and $A_n, n\ge1, M\in\Con(T)$. Suppose
that $M\cap A_n\not=\emptyset$ for each $n$ and both the sequences
$\{M\cap A_n\}_{n=0}^{\infty}\subseteq\Con(M)$ and
$\{\pi_M(A_n)\}_{n=0}^{\infty}\subseteq\Con(T/_M)$ converge in the
corresponding spaces. Then the sequence $\{A_n\}_{n=0}^{\infty}$
converges in $\Con(T)$.
\end{lemma}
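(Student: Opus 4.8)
The plan is to establish convergence directly from the Kuratowski description of the topology on $\Con(T)$: since $\liminf A_n\subseteq\limsup A_n$ always holds, it suffices to prove the reverse inclusion $\limsup A_n\subseteq\liminf A_n$, for then $\liminf A_n=\limsup A_n$ is the desired limit (it is automatically a subcontinuum by the facts recalled above). Write $B=\lim(M\cap A_n)\in\Con(M)$ and $C=\lim\pi_M(A_n)\in\Con(T/_M)$ for the two given limits, and put $\ast=\pi_M(M)$; note $\ast\in C$ because $\ast\in\pi_M(A_n)$ for every $n$.

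Two features of the tree structure will carry the argument. First, because $M$ is a subtree of $T$ there is a continuous nearest-point retraction $r_M\colon T\to M$ sending $x$ to the unique point at which the arc from $x$ first meets $M$; since each $A_n$ is connected and meets $M$, for $x\in A_n$ the arc joining $x$ to a point of $M\cap A_n$ lies in $A_n$ and first meets $M$ at $r_M(x)$, so $r_M(A_n)\subseteq M\cap A_n$. Second, $\pi_M$ collapses $M$ to $\ast$ and restricts to a homeomorphism of the open set $T\setminus M$ onto $(T/_M)\setminus\{\ast\}$.

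With these in hand, take $x\in\limsup A_n$, say $x_{n_k}\in A_{n_k}$ with $x_{n_k}\to x$. If $x\in M$, then $r_M(x_{n_k})\in M\cap A_{n_k}$ and $r_M(x_{n_k})\to r_M(x)=x$, so $x\in\limsup(M\cap A_n)=B=\liminf(M\cap A_n)\subseteq\liminf A_n$. If $x\notin M$, then $\pi_M(x)\neq\ast$ lies in $\limsup\pi_M(A_n)=C=\liminf\pi_M(A_n)$; choosing a neighbourhood $W$ of $x$ with $\overline{W}\subseteq T\setminus M$ (possible since $T\setminus M$ is open), the set $\pi_M(W)$ is an open neighbourhood of $\pi_M(x)$ in $T/_M$, so $\pi_M(W)\cap\pi_M(A_n)\neq\emptyset$ for all large $n$, and injectivity of $\pi_M$ off $M$ turns any such witness into a point of $W\cap A_n$; hence $x\in\liminf A_n$. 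In either case $x\in\liminf A_n$, completing the reverse inclusion and the proof.

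I expect the only delicate point — the main (if modest) obstacle — to be the transfer of convergence from $T/_M$ back to $T$ at points outside $M$: one must take $W$ with closure disjoint from $M$ so that a point of $\pi_M(W)\cap\pi_M(A_n)$ is genuinely realized by a point of $A_n\cap W$ rather than by a point of $M$ that merely projects into $\pi_M(W)$. Verifying that $r_M$ is well defined and continuous and that $\pi_M$ is a homeomorphism off $M$ is routine for trees, and I would treat it as part of the topological facts already listed.
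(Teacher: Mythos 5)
Your proof is correct and follows essentially the same route as the paper's: both split into the cases $x\in M$ and $x\notin M$, handling the former via the nearest-point projection onto $M$ (the paper carries this out by hand, taking $u$ with $[z,u]=[z,y]\cap M$ inside a connected open neighbourhood, which is exactly your $r_M$), and the latter via the fact that $\pi_M$ is injective off $M$ together with neighbourhoods disjoint from $M$. The only cosmetic difference is that you verify the single inclusion $\limsup A_n\subseteq\liminf A_n$, while the paper computes $\liminf$ and $\limsup$ on $M$ and off $M$ separately and identifies the limit explicitly as $\left(A'\setminus\pi_M(M)\right)\cup A''$ in terms of the two given limits.
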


\begin{proof}
If $x\in T\setminus M$, then one can take an open set $U\ni x$
such that $U\cap M=\emptyset$. So, each $x\in T\setminus M$
belongs to $\liminf A_n$ (resp. $\limsup A_n$) iff $x$ belongs to
$\liminf \pi_M(A_n)$ (resp. $\limsup \pi_M(A_n)$). Next, we are
going to prove $M\cap\liminf A_n=\liminf (M\cap A_n)$ and
$M\cap\limsup A_n=\limsup (M\cap A_n)$. To achieve this, it
suffices to show that, given $x\in M$ and a connected open subset
$U$ of $T$ with $U\ni x$, if $U$ intersects $A_n$ then it
intersects $M\cap A_n$, for each $n$. Let $x$ and $U$ be as above
and assume that $U\cap A_n\not=\emptyset$. We take $y\in U\cap
A_n$, $z\in M\cap A_n$ and $u\in [z,y]\subseteq A_n$ such that
$[z,u]=[z,y]\cap M$. Then $u\in M\cap A_n$, and so it is enough to
show $u\in U$. To this end, observe that
$u\in\overline{\Comp(T\setminus M, y)}$, and hence
$\overline{\Comp(T\setminus M, y)}\cap M=\{u\}$. Let $v\in
[x,y]\subseteq U$ be such that $[x,v]=[x,y]\cap M$. Then $v\in
M\cap U$ and $v\in\overline{\Comp(T\setminus M, y)}$. Thus
$\{v\}=\overline{\Comp(T\setminus M, y)}\cap M=\{u\}$ which leads
to $u=v\in U$.

To sum it up, we have proved that $(T\setminus M)\cap \liminf
A_n=(\liminf \pi_M(A_n))\setminus \pi_M(M)$ and $M\cap\liminf
A_n=\liminf (M\cap A_n)$, and also the same with $\liminf$
replaced by $\limsup$. Therefore, $\liminf A_n=\limsup
A_n=\left(A'\setminus \pi_M(M)\right)\cup A''$, where $A'$ and
$A''$ denote the limits of the sequences
$\{\pi_M(A_n)\}_{n=0}^{\infty}$ and $\{M\cap A_n\}_{n=0}^{\infty}$
respectively.
\end{proof}

Given a dynamical system $(X,f)$, a set $M\subseteq X$ is called
\emph{invariant} (resp. \emph{strongly invariant}) if
$f(M)\subseteq M$ (resp. $f(M)=M$). For a subset $A\subseteq X$,
we denote by $\Ls(f,A)$ the set-theoretical limit superior of the
sequence $\{f^n(A)\}_{n=0}^{\infty}$, i.e.
$\Ls(f,A)=\cap_{m=0}^{\infty}\cup_{n= m}^{\infty}f^n(A)$.

\begin{lemma}\label{lm:Ls}
Let $f:T\to T$ be a tree map and $A\in\Con(T)$ contains a fixed
point $x$ of $f$. Then $\Ls(f,A)$ is strongly invariant connected
subset of $T$ containing $x$.
\end{lemma}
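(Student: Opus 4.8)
The plan is to realize $\Ls(f,A)$ as a decreasing intersection of continua and to read off all three required properties from that description. Write $B_m=\overline{\bigcup_{n\ge m}f^n(A)}$, so that $\Ls(f,A)=\bigcap_{m=0}^{\infty}B_m$. The starting observation is that, since $x\in A$ is fixed, $x=f^n(x)\in f^n(A)$ for every $n$; hence every $f^n(A)$ is a continuum (a continuous image of the continuum $A$) containing the common point $x$. Consequently each union $\bigcup_{n\ge m}f^n(A)$ is connected, being a union of connected sets with a common point, and so each $B_m$ is a subcontinuum of $T$ containing $x$.

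Connectedness and the containment of $x$ are then immediate: the sets $B_m$ form a decreasing sequence of continua, so their intersection $\Ls(f,A)$ is again a nonempty continuum, and $x\in B_m$ for all $m$ forces $x\in\Ls(f,A)$. Alternatively, one may invoke the listed fact that an intersection of connected subsets of a tree is connected or empty, applied to the connected sets $\bigcup_{n\ge m}f^n(A)$.

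For invariance I would first record the identity $f(B_m)=B_{m+1}$. This follows from $f\bigl(\bigcup_{n\ge m}f^n(A)\bigr)=\bigcup_{n\ge m+1}f^n(A)$ together with the fact that $f(\overline{S})=\overline{f(S)}$ for any $S$, which is valid because $T$ is compact and $f$ continuous. Then $f(\Ls(f,A))=f\bigl(\bigcap_m B_m\bigr)\subseteq\bigcap_m f(B_m)=\bigcap_m B_{m+1}=\Ls(f,A)$, giving the inclusion $f(\Ls(f,A))\subseteq\Ls(f,A)$.

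The main obstacle is the reverse inclusion $\Ls(f,A)\subseteq f(\Ls(f,A))$, that is, producing for a given $y\in\Ls(f,A)$ a preimage of $y$ that again lies in $\Ls(f,A)$ rather than merely in some single level set. Here I would argue by compactness. Since $y\in B_{m+1}=f(B_m)$ for every $m$, I can choose $w_m\in B_m$ with $f(w_m)=y$. The points $w_m$ all lie in the compact set $B_0$, so a subsequence $w_{m_j}$ converges to some $w$; because the $B_m$ are nested and closed, $w_{m_j}\in B_M$ for all large $j$ and each fixed $M$, whence $w\in\bigcap_M B_M=\Ls(f,A)$. Continuity gives $f(w)=\lim_j f(w_{m_j})=y$, so $y\in f(\Ls(f,A))$. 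Combining the two inclusions yields the strong invariance $f(\Ls(f,A))=\Ls(f,A)$.
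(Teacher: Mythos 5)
There is a genuine gap, and it is located in your very first sentence. The paper defines $\Ls(f,A)$ as the \emph{set-theoretic} limit superior $\bigcap_{m\ge0}\bigcup_{n\ge m}f^n(A)$, with no closures taken, and this set is in general not closed; your identification $\Ls(f,A)=\bigcap_m B_m$ with $B_m=\overline{\bigcup_{n\ge m}f^n(A)}$ is false. Concretely, take $T=[0,1]$, $f(t)=\sqrt{t}$ and $A=[0,1/2]$ (which contains the fixed point $0$): then $f^n(A)=[0,(1/2)^{1/2^n}]$, so $\bigcup_{n\ge m}f^n(A)=[0,1)$ for every $m$, hence $\Ls(f,A)=[0,1)$, while $\bigcap_m B_m=[0,1]$. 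What you prove --- correctly, by the standard nested-compact-sets argument --- is that $\bigcap_m B_m$ is a strongly invariant continuum containing $x$, but that is a statement about a possibly strictly larger, automatically closed set, not about $\Ls(f,A)$. The distinction is not pedantic: the proof of Theorem~\ref{Th:PerToAsPer} explicitly splits into the cases where $\Ls(f,A)$ is closed and where it is not, so the lemma must be established for the non-closed set.

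The failure propagates exactly to the step you call the main obstacle. Parts of your argument do survive with the true levels $\Delta_m=\bigcup_{n\ge m}f^n(A)$: each $\Delta_m$ is connected and contains $x$, the intersection is connected by the tree fact listed in the paper, and $f(\Delta_m)=\Delta_{m+1}$ gives $f(\Ls(f,A))\subseteq\Ls(f,A)$ with no closures needed. But for the reverse inclusion, choosing $w_m\in\Delta_m$ with $f(w_m)=y$ and passing to a subsequential limit $w$ only guarantees $w\in\bigcap_m\overline{\Delta_m}$, not $w\in\bigcap_m\Delta_m=\Ls(f,A)$; closedness of the levels is precisely what your nestedness argument consumes, and it is unavailable. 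This is the difficulty the paper's proof is built to overcome, using tree-specific tools: since each $\Delta_m$ is a connected subset of a tree containing both $x$ and $w_m$, it contains the whole arc $[x,w_m]$; one then either finds some $w_m$ already lying in $\Ls(f,A)$, or extracts (via the notion of a sequence consistent with $x$ and Lemma~\ref{lm:consistent seq}) a convergent subsequence that is monotone along arcs, i.e. $[x,w_{m_s}]\subseteq[x,w_{m_r}]$ for $s>r$, so that its limit lies in $\bigcap_r[x,w_{m_r}]\subseteq\bigcap_r\Delta_{m_r}=\Ls(f,A)$. Your compactness argument cannot be repaired without some such substitute for closedness.
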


\begin{proof}
Let $\Delta =\Ls(f,A)= \cap_{m=0}^{\infty} \Delta_m$, where
$\Delta_m=\cup_{n=m}^{\infty} f^n(A)$ for any $m\ge 0$. First,
$\Delta$ is a connected set containing $x$, because each $f^n(A)$
is. Next, since $f(\Delta_m)=\Delta_{m+1}$ for each $m\ge0$ and
$\Delta_m$ decreases on $m$, the set $\Delta$ is invariant as
intersection of a family of invariant sets. On the other hand, fix
any $x\in\Delta$ and let us show that $x=f(y)$ for some
$y\in\Delta$. Whatever the $m\ge0$, from
$x\in\Delta_{m+1}=f(\Delta_m)$ we get $x=f(y_m)$ for some
$y_m\in\Delta_m$. Consider the sequence $\{y_m\}_{m=0}^{\infty}$.
Let $m_0=0$. If $y_m\notin\Comp(T\setminus\{y_{m_0}\},x)$ for
infinitely many $m\ge m_0$, then $y_{m_0}\in[x,y_m]\subseteq
\Delta_m$ for infinitely many $m\ge m_0$, and so
$y_{m_0}\in\Delta$. Otherwise, there is $m_1>m_0$ such that
$y_m\in\Comp(T\setminus\{y_{m_0}\},x)$ for all $m\ge m_1$. On the
next step, if $y_m\notin\Comp(T\setminus\{y_{m_1}\},x)$ for
infinitely many $m\ge m_1$, then $y_{m_1}\in[x,y_m]\subseteq
\Delta_m$ for infinitely many $m\ge m_1$, and so
$y_{m_1}\in\Delta$. Otherwise, there is $m_2>m_1$ such that
$y_m\in\Comp(T\setminus\{y_{m_1}\},x)$ for all $m\ge m_2$.
Repeating this procedure, we either get that $x=f(y_{m_r})$,
$y_{m_r}\in\Delta$ for some $r\ge0$ or get the subsequence
$\{y_{m_r}\}_{r=1}^{\infty}$ which is consistent with $x$ (see the
definition of consistent sequence before Lemma~\ref{lm:consistent
seq}) and such that $x=f(y_{m_r})$, $y_{m_r}\in\Delta_{m_r}$ for
each $r\ge0$. In the former case we get exactly what we need to
complete the proof of strong invariance of $\Delta$. In the latter
case, applying Lemma~\ref{lm:consistent seq}, we get a convergent
subsequence $\{y_{m_r}\}_{r\in L}$ (here $L$ is an infinite subset
of the set of nonnegative integers) such that
$[x,y_{m_s}]\subseteq [x,y_{m_r}]$ whenever $s>r$ and $s,r\in L$.
Therefore, if $y$ denotes the limit of $\{y_{m_r}\}_{r\in L}$,
then $y\in\cap_{r\in L}[x,y_{m_r}]\subseteq \cap_{r\in
L}\Delta_{m_r}=\Delta$ and, by continuity, $x=f(y)$. So, we have
showed that the set $\Delta$ is strongly invariant.
\end{proof}

Given a dynamical system $(X,f)$ and a nonempty, closed and
invariant set $M\subseteq X$, one can consider a \emph{subsystem}
$(M,f|_M)$, where $f|_M$ is the restriction of $f$ to $M$. In the
same setting, one can define a \emph{factor-system} $(X,f)/_M :=
(X/_M,f/_M)$, where $X/_M$ is the factor space and $f/_M:X/_M\to
X/_M$ is given by $f/_M=\pi_M\circ f \circ\pi_M^{-1}$ where
$\pi_M:X\to X/_M$ is the canonical projection.

Let $f:T\to T$ be a tree map. A continuum $A\in\Con(T)$ is called
\emph{asymptotically periodic} under $f$ if the sequence
$f^{pn}(A), n\ge0$ converges for some $p\ge1$.

\begin{theorem}\label{Th:PerToAsPer}
Let $f:T\to T$ be a tree map and $A\in\Con(T)$ contains a periodic
point of $f$. Then $A$ is asymptotically periodic under $f$.
\end{theorem}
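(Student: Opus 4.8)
The plan is to reduce first to the case of a fixed point and then to split the tree into an invariant ``core'' and its complement by means of Lemma~\ref{lm:1}. To dispose of the period, let $z\in A$ be a periodic point of $f$, say of period $p$, and put $g=f^{p}$; then $z$ is a fixed point of $g$ lying in $A$. Since $(f^{p})^{qn}=f^{pqn}$, the continuum $A$ is asymptotically periodic under $f$ as soon as it is asymptotically periodic under $g$. Hence I may assume from the start that $f(x)=x$ for some $x\in A$, so that every iterate $f^{n}(A)$ is a subcontinuum of $T$ containing $x$, i.e.\ a subtree rooted at $x$.

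Next I would introduce the core $M=\overline{\Ls(f,A)}$. By Lemma~\ref{lm:Ls} the set $\Ls(f,A)$ is a connected, strongly invariant subset of $T$ containing $x$, so $M$ is a subcontinuum (a subtree) with $x\in M$; moreover, since $f(M)\subseteq\overline{f(\Ls(f,A))}=M$ while $\Ls(f,A)=f(\Ls(f,A))\subseteq f(M)$ with $f(M)$ compact, one gets $f(M)=M$, i.e.\ $M$ is strongly invariant. Write $\pi_{M}\colon T\to T/_{M}$ for the canonical projection and $\star=\pi_{M}(M)$, a fixed point of the factor map. By Lemma~\ref{lm:1}, to prove that $\{f^{pn}(A)\}_{n}$ converges in $\Con(T)$ for a suitable $p$ it suffices to prove, along a common arithmetic progression $pn$, that $\{M\cap f^{n}(A)\}_{n}$ converges in $\Con(M)$ and that $\{\pi_{M}(f^{n}(A))\}_{n}$ converges in $\Con(T/_{M})$.

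The factor part is the easy one: I claim $\pi_{M}(f^{n}(A))\to\{\star\}$, so here the period is irrelevant. Each $\pi_{M}(f^{n}(A))$ contains $\star$, so it is enough to show that no point of $T\setminus M$ lies in the topological upper limit of the $f^{n}(A)$. If some $y\notin M$ did, then, because $f^{n}(A)$ is a subtree containing $x\in M$, the arc $[x,y]$ would force $f^{n}(A)$ to cross $\partial M$ and to reach a definite distance into the complementary component of $M$ at $y$, for infinitely many $n$. As $\partial M$ and the set of edges are finite (see the topological facts before Lemma~\ref{lm:consistent seq}), infinitely many of these crossings would use the same boundary point and the same first edge, so a fixed interior point of that edge would belong to $f^{n}(A)$ for infinitely many $n$; this point would then lie in $\Ls(f,A)\subseteq M$, contradicting its choice outside $M$. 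Consequently the ``overhangs'' shrink, $d_{H}\bigl(f^{n}(A),\,M\cap f^{n}(A)\bigr)\to 0$, and $\{f^{n}(A)\}$ converges (along any progression) if and only if $\{M\cap f^{n}(A)\}$ does, and to the same limit.

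The inside part is where the real difficulty lies, and I expect it to be the main obstacle. Writing $B_{n}=M\cap f^{n}(A)$, strong invariance of $M$ gives $f(B_{n})\subseteq M$, so $B_{n+1}$ equals $f|_{M}(B_{n})$ enlarged only by the image of the vanishing overhang $O_{n}=f^{n}(A)\setminus M$ that re-enters $M$; hence $\{B_{n}\}$ is not a genuine orbit of $f|_{M}$ but only an asymptotic pseudo-orbit, since material leaving $M$ may return to it. To control this I would argue on the smaller tree $M$, on which $f|_{M}$ is \emph{surjective} and $\Ls(f,A)$ is dense, and proceed by induction on the number of edges: if $M\subsetneq T$ the inductive hypothesis handles the orbit part, while the extra re-entering pieces are pinned down using Lemma~\ref{lm:consistent seq}, which shows that the ``reach'' of $B_{n}$ in each of the finitely many branch directions from $x$ can oscillate only through finitely many monotone regimes, hence is asymptotically periodic. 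The borderline case $M=T$ (so $f$ itself is surjective with $\Ls(f,A)$ dense) must be treated separately and is the technical heart, where one recovers the interval estimate of~\cite{Fed} direction by direction. Assembling a common period $p$ from these finitely many directions and feeding the resulting convergence, together with the factor convergence, back into Lemma~\ref{lm:1} yields the asymptotic periodicity of $A$.
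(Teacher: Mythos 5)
Your reduction to a fixed point, your construction of $M=\overline{\Ls(f,A)}$, and your claim that $\pi_M(f^n(A))\to\{\star\}$ are all correct (the argument via a point $w$ strictly between the exit point of $[x,y]$ from $M$ and $y$, forced into the set-theoretic limit superior, does work). But the proposal then stops exactly where the theorem begins: the convergence of the ``inside part'' $M\cap f^n(A)$ is never proved, and your own text concedes it is ``the main obstacle'' and ``the technical heart.'' This is a genuine gap, not a deferred technicality, for three reasons. First, when $\Ls(f,A)$ is dense in $T$ one has $M=T$, the factor is a single point, and your reduction accomplishes nothing at all — the remaining case is the full original problem. Second, even when $M\subsetneq T$, an induction ``on the number of edges'' does not get off the ground: a proper subtree can have the same number of edges as $T$, and, more importantly, the sets $B_n=M\cap f^n(A)$ are (as you note yourself) only a pseudo-orbit of $f|_M$, so an inductive hypothesis about orbits of a continuum under a tree map simply does not apply to them. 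Third, the appeal to Lemma~\ref{lm:consistent seq} to control the ``reach'' of $B_n$ in each branch direction is unfounded: that lemma concerns sequences consistent with $x$, and the sequence of farthest points of $B_n$ along a branch has no reason to be consistent — its possible oscillation is precisely the difficulty.

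The paper resolves this by choosing $M$ differently, in a way that makes the \emph{inside} part trivial and pushes the induction onto the \emph{factor}. Writing $\overline{\Delta}=\cup_{i=1}^k[x,x_i]$ with $\Delta=\Ls(f,A)$, strong invariance gives indices $i,j$ with $f^j[x,x_i]\supseteq[x,x_i]$; after replacing $A$ by some $f^m(A)$ containing $x_i$ and $f$ by $f^j$, one takes $M=\cup_{n\ge0}f^n[x,x_i]$, an increasing union, so $M\cap f^n(A)\supseteq f^n[x,x_i]\to M$ automatically. Lemma~\ref{lm:1} then reduces everything to the factor system $(T/_M,f/_M)$, where $\Ls(f/_M,\pi_M(A))=\pi_M(\Delta)$ has strictly fewer endpoints (since $[x,x_i]$ has been collapsed), and one recurses until $\Delta$ is a point; a separate, parallel argument handles the case where $\Delta$ is not closed, collapsing one point of the finite strongly invariant set $\overline{\Delta}\setminus\Delta$ at each step. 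This ``expanding arc'' selection is the key idea missing from your proposal; without it, or some substitute argument for the case $M=T$, the proof is incomplete.
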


\begin{proof}
Let $x\in A$ be a periodic point. In the sequel we will freely
replace $f$ with $f^k$ and $A$ with $f^m(A)$ for some positive
integers $k,m$, because it is enough to prove that the sequence
$f^{pkn+m}(A), n\ge 0$ converges for some $p\ge1$. Thus, at first,
it is convenient to assume that $x$ is a just fixed point.

Let $\Delta=\Ls(f,A)$. By Lemma~\ref{lm:Ls}, the set $\Delta$ is a
connected strongly invariant set containing $x$. If it happens
that $\Delta=\{x\}$ then we are done, because we easily get
$f^n(A)\to \{x\}, n\to\infty$. Otherwise, we express
$\overline{\Delta}$ as the union $\cup_{i=1}^k[x,x_i]$ where
$k=|\En(\overline{\Delta})\setminus\{x\}|$ and
$\{x_1,x_2,\dots,x_k\}$ is an enumeration of all endpoints of
$\overline{\Delta}$ but possibly $x$ (if $x\in
\En(\overline{\Delta})$). Here some of $x_i$'s belong to $\Delta$,
while the others belong to $\overline{\Delta}\setminus\Delta$.
Next, passing to subsystems and factor-systems, we will decrease
the number of $x_i$'s.

First, we consider the case when all $x_i$'s belong to $\Delta$,
i.e. $\Delta$ is closed. Since $f(\Delta)=\Delta$ and
$\Delta=\cup_{i=1}^k[x,x_i]$, for each $1\le i\le k$ there is
$1\le j\le k$ such that $f[x,x_j]\supseteq[x,x_i]$. Hence, there
are $1\le i,j\le k$ such that $f^j[x,x_i]\supseteq [x,x_i]$. By
definition of $\Delta$, there is a positive integer $m$ such that
$x_i\in f^m(A)$. Replacing $A$ with $f^m(A)$ and $f$ with $f^j$,
we can assume that $x_i\in A$ and $f[x,x_i]\supseteq [x,x_i]$.
Now, we are going to prove that $f^{pn}(A)$ converges as
$n\to\infty$, for some $p\ge1$. To this end, we are going to use
Lemma~\ref{lm:1} for $M=\cup_{n\ge 0}f^n[x,x_i]$ and for the
sequence $A_{np}=f^{np}(A)$, $n\ge0$. Since clearly $M\cap A_n\to
M$, $n\to\infty$, all we need is to prove that $\pi_M(A_{pn})$
converges as $n\to\infty$, for some $p\ge1$. We remark that
$\pi_M(f^{n}(A))= g^{n}(B)$, $n\ge 0$ where $g=f/_M$ and
$B=\pi_M(A)$. Thus we consider the factor-system $(T/_M,g)$ and
continuum $B=\pi_M(A)\subseteq T/_M$ which contains the fixed
point $\pi_M(x)$ of $g$. Therefore, we have reduced the proving of
asymptotical periodicity of $A$ under $f$ to the proving of
asymptotical periodicity of $B$ under $g$. The set
$\Ls(g,B)=\pi_M(\Delta)$ is again, by Lemma~\ref{lm:Ls}, a
strongly invariant continuum containing the fixed point, but now
$\En(\Ls(g,B))\setminus \{\pi_M(x)\}$ has at most $k-1$ elements,
because $\pi_M[x,x_i]=\{\pi_M(x)\}$. By repeating this procedure
we will eventually get that $\Delta=\{x\}$, and so the proof is
complete for the case of closed $\Delta$.

Now, assume that $\Delta$ is not closed. Since $\Delta$ is
strongly invariant, $\overline{\Delta}\setminus\Delta$ is also
strongly invariant. Moreover, $\overline{\Delta}\setminus\Delta$
is finite as it is contained in the boundary of connected subset
of a tree. Replacing $f$ with
$f^{|\overline{\Delta}\setminus\Delta|!}$, we can assume that all
points of $\overline{\Delta}\setminus\Delta$ are fixed under $f$.
Let $x_i\in\overline{\Delta}\setminus\Delta$, $f(x_i)=x_i$. Let us
show that $\overline{\Delta_m}$ contains only one preimage of
$x_i$ for each $m$ which is large enough, where
$\Delta_m=\cup_{n=m}^{\infty} f^n(A)$. In order to see this, note
that $\{x\in\overline{\Delta} : f(x)=x_i\}$ is just a singleton
$\{x_i\}$, for the set $\Delta$ is invariant and each point in
$\overline{\Delta}\setminus\Delta$ is fixed. Choose $m'$ such that
$x_i\notin\Delta_{m'}$. Then
$x_i\in\overline{\Delta_{m'}}\setminus\Delta_{m'}$, in particular,
$x_i$ is endpoint of $\overline{\Delta_{m'}}$. Consider the closed
set $\{x\in\overline{\Delta_{m'}} : f(x)=x_i\}$. As we remarked
above, it intersects $\overline{\Delta}$ at exactly one point
$x_i$. Moreover, $x_i$ is isolated in $\overline{\Delta_{m'}}$,
for $x_i$ is an endpoint for both $\overline{\Delta_{m'}}$ and
$\overline{\Delta}$. Therefore, $\{x\in\overline{\Delta_{m}} :
f(x)=x_i\}=\{x_i\}$ for each $m\ge m'$ which is large enough. So,
replacing $(T, f)$ with the subsystem $(\overline{\Delta_{m}},
f|_{\overline{\Delta_{m}}})$, we can assume that $x_i$ is an
endpoint of $T$ and $f^{-1}(x_i)=\{x_i\}$.

Since $x_i\notin\Delta_{m}\supseteq f^{m}(A)$, there is a small
enough neighbourhood $[x_i,y)$ of $x_i$ such that $T\setminus
[x_i,y)\supseteq f^{m}(A)$. It follows that if a closed invariant
set contains $T\setminus [x_i,y)$, it must coincide with whole
$\overline{\Delta_{m}}=T$. Since $f^{-1}(x_i)=\{x_i\}$, we can
take a neighbourhood $[x_i,z)\subseteq [x_i,y)$ of $x_i$ such that
$T\setminus [x_i,z)\supseteq f(T\setminus [x_i,y))$. Then
$f[x_i,z]\supset[x_i,z]$, for otherwise the set $T\setminus
[x_i,z)=(T\setminus [x_i,y))\cup [x,z]$ would be proper closed
invariant subset of $T$ which contains $T\setminus [x_i,y)$.
Similarly we get $\overline{\cup_{n\ge 0}f^n[x,z]}\ni x_i$, for
otherwise the set $(T\setminus
[x_i,y))\cup\left(\overline{\cup_{n\ge 0}f^n[x,z]}\right)$ would
be proper closed invariant subset of $T$ which contains
$T\setminus [x_i,y)$. Now, we using Lemma~\ref{lm:1} pass to the
factor-system $(T/_M,f/_M)$, where $M=\overline{\cup_{n\ge
0}f^n[x,z]}$. Putting $A_n=f^n(A)$, $n\ge m$ we get $M\cap A_n\to
M$, $n\to\infty$, so we need only to show that $\pi_M(A_{pn})$
converges as $n\to\infty$, for some $p\ge1$. Since
$\pi_M(A_{n+m})=g^n(B)$ where $g=f/_m$ and $B=\pi_M(f^m(A))$, we
need only to prove that the continuum $B$, which contains the
fixed point $\pi_M(x)$, is asymptotically periodic under the tree
map $g$. We remark that $\Ls(g,B)=\pi_M(\Delta)$, and
$\overline{\Ls(g,B)}\setminus\Ls(g,B)=\pi_M(\overline{\Delta})
\setminus\pi_M(\Delta)$ consist of at most
$|\overline{\Delta}\setminus\Delta|-1$ points, for
$\pi_M(x_i)=\pi_M(x)\in\pi_M(\Delta)$. Thus step by step we reduce
the general case to the case when $\overline{\Delta}\setminus
\Delta$ is empty, i.e. $\Delta$ is closed (this case was
considered earlier).
\end{proof}

\section{The dynamics of a tree system without periodic
cut-points.}\label{WithoutPer}

Recall that, given a map $f:X\to X$, a point $x\in X$ is called an
\emph{attracting fixed point} (AFP, for short) if for any open set
$U\ni x$ there is an open set $U\supseteq V\ni x$ such that
$f(\overline{V})\subseteq V$. Let $f:T\to T$ be a tree map such
that no cut-point of $T$ is fixed under $f$. Then one can easily
see that if $x\in\En T$ such that $f(y)\in[x,y)$ for some $y$
within the edge of $T$ containing $x$, then $x$ is AFP. On the
other hand, if $x\in\En T$ is AFP, then $f(y)\in[x,y)$ for each
$y$ within the edge of $T$ containing $x$.

\begin{lemma}\label{Lm:UnAttr}
Let $f:T\to T$ be a tree map such that no cut-point of $T$ is
fixed. Then there is unique AFP of $f$ in $T$ (which is, of
course, an endpoint of $T$).
\end{lemma}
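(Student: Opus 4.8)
The plan is to prove existence and uniqueness of the attracting fixed point (AFP) separately. Existence is cheap: by the fixed point property a tree map has a fixed point, and since no cut-point is fixed, every fixed point lies in $\En(T)$. The real content is uniqueness, together with showing that at least one endpoint is actually \emph{attracting} (a fixed endpoint is not automatically AFP). The key tool is the characterization stated just before the lemma: for $x\in\En(T)$ with $y$ in the edge containing $x$, the endpoint $x$ is AFP precisely when $f(y)\in[x,y)$, i.e.\ the map pushes nearby points strictly toward $x$.

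For uniqueness I would argue by contradiction, assuming two distinct AFPs $a,b\in\En(T)$. Around each I can choose a small invariant arc-neighborhood $V_a\ni a$, $V_b\ni b$ with $f(\overline{V_a})\subseteq V_a$ and $f(\overline{V_b})\subseteq V_b$, shrunk small enough to be disjoint. Consider the arc $[a,b]$ joining them. The idea is to track the ``frontier'' point on $[a,b]$ separating the basin of $a$ from the rest: set
\[
c=\sup\{t\in[a,b] : f^n([a,t])\to\{a\}\text{ as }n\to\infty\},
\]
interpreting the order along $[a,b]$ from $a$ to $b$. Such a $c$ exists and is an interior point of $[a,b]$, hence a cut-point of $T$. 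The contradiction I expect to extract is that $f(c)=c$: on one side the attraction toward $a$ forces $f$ to keep $c$'s neighborhood contracting toward $c$, and symmetric control near $b$ prevents $c$ from being pushed past $b$, so $c$ is pinned as a fixed cut-point, contradicting the hypothesis that no cut-point is fixed. Since two distinct AFPs cannot coexist, uniqueness follows once existence of at least one AFP is secured.

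Existence of an attracting endpoint is the step I expect to be the main obstacle, since the fixed point property only yields a fixed point, not an attracting one. Here I would start from an arbitrary fixed endpoint $x_0$ and use a descent/maximality argument analogous to the factor-system reductions in Theorem~\ref{Th:PerToAsPer}: if $x_0$ is not attracting, then for the edge-parameter $y$ near $x_0$ one has $f(y)\notin[x_0,y)$, meaning the map moves points away from $x_0$; following the forward orbit of such a $y$ and using that cut-points are never fixed, the orbit must eventually accumulate on \emph{another} endpoint, toward which the dynamics contract. Iterating this ``pushing'' across edges of the tree, and using finiteness of $V(T)$ so the process terminates, I would locate an endpoint where the strict inward condition $f(y)\in[x,y)$ holds, giving an AFP by the stated characterization. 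The technical care lies in making the ``follow the orbit until it settles at an endpoint'' argument precise while respecting that intermediate cut-points cannot be fixed, so the orbit genuinely transits rather than stalls; the tree's finite vertex set and the fixed-point property of each subtree are what make this halt at a bona fide attracting endpoint.
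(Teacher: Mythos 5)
Your existence argument rests on the claim that, starting from a non-attracting fixed endpoint $x_0$, the forward orbit of a nearby point ``must eventually accumulate on another endpoint, toward which the dynamics contract.'' This is false under the hypothesis of the lemma: only \emph{fixed} cut-points are excluded, so cut-points of period $\ge 2$ are allowed, and orbits may accumulate on an attracting periodic orbit of cut-points rather than on any endpoint. Concretely, on a star with four edges one can build a map with no fixed cut-point having a repelling fixed endpoint $e_4$, an attracting period-two orbit $\{p_1,p_2\}$ of cut-points lying in two other edges, and the attracting endpoint on the remaining edge; points arbitrarily close to $e_4$ then have $\omega$-limit set $\{p_1,p_2\}$, so no orbit-following procedure launched near $e_4$ ever detects the AFP. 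Note also that the statement ``orbits not hitting periodic points converge to the AFP'' is essentially Lemma~\ref{Lm:PointAsPer}, which is proved \emph{after} and \emph{from} the present lemma, and under the stronger hypothesis that no cut-point is periodic; so even granting the claim would make the argument circular. The paper's existence proof is instead static and combinatorial: using the projection $Pr_{[y_1,y]}$ onto an arc and the fixed point property of arcs, it produces a cut-point $y$ moving towards an endpoint $x$ such that $\Comp(T\setminus\{y\},x)$ contains strictly fewer endpoints at each step of an induction; when the count reaches one, that component is a half-open edge $[x,y)$ with $f(y)\in[x,y)$, and the criterion stated before the lemma makes $x$ an AFP. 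The key mechanism -- a fixed point of the projected map is either a genuine fixed point of $f$ (excluded for cut-points) or a point whose image hangs off a branch -- has no analogue in your orbit-following scheme.

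The uniqueness half also has a gap: the assertion that the frontier point $c=\sup\{t\in[a,b] : f^n([a,t])\to\{a\}\}$ satisfies $f(c)=c$ is precisely what needs proof, and ``attraction toward $a$ forces $f$ to keep $c$'s neighborhood contracting toward $c$'' is not an argument. The difficulty is that $f^n([a,t])$ is a subtree, not a subarc: even for $t<c$ the image $f([a,t])$ may protrude along branches of $T$ hanging off $[a,b]$, or overshoot $c$ and be pulled back by later iterates, so the supremum along the arc gives you no local control of $f$ near $c$, and the natural attempts to contradict the definition of $c$ when $f(c)\neq c$ require controlling entire images of arcs, which your definition does not do. The paper's uniqueness proof avoids iteration altogether: assuming two AFPs $x,x'$, it sets $W=\{y\in(x,x') : f(y)\in\Comp(T\setminus\{y\},x)\}$; because no cut-point is fixed, both $W$ and $(x,x')\setminus W$ are open, and both are nonempty (points near $x$ move towards $x$, points near $x'$ move towards $x'$), contradicting connectedness of $(x,x')$. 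Your intuition of a ``pinned'' fixed cut-point between the two attractors is morally this connectedness argument, but to make it rigorous you should drop the iterates and the supremum and work with this one-step open/open dichotomy.
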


\begin{proof} \emph{Existence}. We will say that a point $y\in \Cut(T)$ \emph{moves towards} $x\in
\En(T)$ if $f(y)\in\Comp(T\setminus \{y\},x)$ (equivalent
condition is $y\notin[x,f(y)]$).

\textbf{Claim.} \emph{For each $1\le k\le|\En(T)|-1$ there is a
cut-point $y$ and an endpoint $x$ such that $y$ moves towards $x$
and $\Comp(T\setminus \{y\},x)$ contains at most $k$ endpoints of
$T$.}

For $k=|\En(T)|-1$ our claim is clear, because we can take
arbitrary $y\in\Cut(T)$ and then any endpoint $x$ from
$\Comp(T\setminus \{y\},f(y))$, so one can see that our claim
holds for the chosen $x$ and $y$. By induction, assume we have
proved the claim for some $k$ and let us prove it for $k-1$.

So, suppose that a cut-point $y$ moves towards an endpoint $x$ and
$\Comp(T\setminus \{y\},x)$ contains at most $k$ endpoints of $T$.
We take $y_1$ close enough to $x$ so that $y_1$ belongs to the
edge of $T$ containing $x$ and $[x,y_1]\subseteq[x,y)$. If $x$ is
AFP, then we are done, otherwise we get $f(y_1)\notin[x,y_1)$. The
latter is equivalent to $y_1\in[x,f(y_1))$. Let us define a
continuous map $g:[y_1,y]\to[y_1,y]$ by $g=Pr_{[y_1,y]}\circ
f|_{[y_1,y]}$, where $Pr_{[y_1,y]}$ denotes the "projection" onto
the set $[y_1,y]$, i.e. $Pr_{[y_1,y]}(z)$ is the unique point in
$\overline{\Comp(T\setminus[y_1,y],z)}\cap[y_1,y]$. By the fixed
point property, there is $y_2\in[y_1,y]$ such that $g(y_2)=y_2$.
Therefore, $y_2=Pr_{[y_1,y]}(f(y_2))$, and so
$[y_2,f(y_2)]\cap[y_1,y]=\{y_2\}$. Moreover $y_2\in[y_1,y)$,
because $y_1\in[x,y)$ and $y$ moves towards $x$. So,
$y\notin[x,y_2]$ which leads to $y_2\in\Comp(T\setminus\{y\},x)$.
This means that all the components of $T\setminus\{y_2\}$ but
$\Comp(T\setminus\{y_2\},y)$ are subsets of
$\Comp(T\setminus\{y\},x)$. On the other hand, $y_2\in[y,f(y_2)]$,
which means that $\Comp(T\setminus\{y_2\},f(y_2))$ is subset of
$\Comp(T\setminus\{y\},x)$, while it does not contain all the
endpoints of $T$ which are within $\Comp(T\setminus\{y\},x)$
(namely, it does not contain $x$). Thus,
$\Comp(T\setminus\{y_2\},f(y_2))$ contains at most $k-1$ endpoints
of $T$, and the claim follows.

In particular, if $k=1$ in the claim above, we get that there is a
cut-point $y$ moving towards an endpoint $x$ and such that
$\Comp(T\setminus\{y\},x)$ is just the semi-open interval $[x,y)$.
So, $x$ is AFP.

\emph{Uniqueness}. On the contrary, suppose that there are two
distinct AFP's $x$ and $x'$. Consider the set
$$
W=\{y\in(x,x'): y ~\text{moves towards}~ x \}
$$
By continuity, both the sets $W$ and $(x,x')\setminus W$ are
nonempty and open in $(x,x')$, which contradicts connectedness of
$(x,x')$.
\end{proof}

\begin{remark}\label{rm:attr_of_f^n}
If a tree map $f:T\to T$ is free of periodic cut-points, then for
each iterate $f^n$ the unique AFP is well defined and coincides
with that of $f$. The reason for that is the following. If $s\in
\En(T)$ is the AFP of $f$, then for each neighbourhood of the form
$[s,y)$ we have $f[s,y)\subset[s,y)$. Thus $f^n[s,y)\subset[s,y)$
for each $n\ge 0$, and so $s$ is the AFP of each $f^n$.
\end{remark}

\begin{remark}\label{rm:move_towards}
One can show, in the same way as in the proof of uniqueness above,
that each cut-point of $T$ moves towards the AFP. Thus, taking
into account Remark~\ref{rm:attr_of_f^n}, we see that if $x\in T$,
then either $f^n(x)\in\En(T)$ for some $n\ge0$ or the sequence
$\{f^n(x)\}_{n=0}^{\infty}$ is consistent with the AFP.
\end{remark}

Next, we describe the dynamics of points and subcontinua in the
system on a tree without periodic cut-points.

\begin{lemma}\label{Lm:PointAsPer} Let $f:T\to T$ be a tree map such that no
cut-point of $T$ is periodic. Let $s\in \En(T)$ be its unique AFP.
Then for each $x\in T$ either

(a) $f^n(x)$ is a periodic cut-point for some $n\ge 0$, or

(b) $f^n(x)\to s, n\to\infty$.
\end{lemma}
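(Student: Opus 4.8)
The plan is to split according to how often the orbit $\{f^n(x)\}_{n\ge0}$ meets the endpoint set $\En(T)$. Throughout I use that, by Remark~\ref{rm:attr_of_f^n}, $s$ is the AFP of every iterate $f^n$, and that, by Remark~\ref{rm:move_towards}, every cut-point moves towards $s$; in particular any block of consecutive iterates that happen to all be cut-points forms a sequence consistent with $s$. I will show that if the orbit meets $\En(T)$ infinitely often we land in alternative~(a), and otherwise in alternative~(b). Note that, since no cut-point of $T$ is periodic, the periodic point produced in (a) is necessarily an endpoint.

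Suppose first that $f^n(x)\in\En(T)$ for infinitely many $n$. A tree is a finite union of intervals, so $\En(T)$ is finite; hence by the pigeonhole principle some endpoint $e$ satisfies $f^{n_1}(x)=e=f^{n_2}(x)$ for two indices $n_1<n_2$. Then $f^{n_2-n_1}(e)=e$, so $e$ is periodic and $f^{n_1}(x)=e$ is a periodic point (an endpoint). This is alternative~(a).

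Now suppose $f^n(x)\in\En(T)$ only finitely often, so that $f^n(x)\in\Cut(T)$ for all $n\ge M$ and some $M$. Writing $y_n=f^{M+n}(x)$, each $y_n$ is a cut-point and therefore moves towards $s$, so $\{y_n\}$ is consistent with $s$. By Lemma~\ref{lm:consistent seq} the set of subsequential limits of $\{y_n\}$, which is exactly $\omega_f(x)$, is finite. A finite $\omega$-limit set of a continuous self-map of a compact space is a single periodic orbit, so every point of $\omega_f(x)$ is periodic, hence an endpoint because no cut-point is periodic. Finally, consistency forces each such limit to equal $s$: along a branch $L_i$ supplied by Lemma~\ref{lm:consistent seq} one has $z_i\in[s,y_n]$ for all large $n\in L_i$, so if $z_i$ were an endpoint different from $s$, the cut-points $y_n\ne z_i$ would have to lie strictly beyond $z_i$ on the side away from $s$, which is impossible at an endpoint. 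Thus $\omega_f(x)=\{s\}$ and $f^n(x)\to s$ as $n\to\infty$, i.e.\ alternative~(b).

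The main obstacle is this last paragraph: ruling out limits other than $s$ for the consistent cut-point tail. The two hypotheses do all the work there. The assumption that no cut-point is periodic eliminates any cut-point limit, via the periodic-orbit structure of a finite $\omega$-limit set, while the endpoint geometry of a sequence monotone towards $s$ eliminates every endpoint limit except $s$ itself. By comparison the pigeonhole step handling the infinitely-often case is routine.
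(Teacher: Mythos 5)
Your proof is correct and follows essentially the same route as the paper's: Remark~\ref{rm:move_towards} gives consistency with $s$, Lemma~\ref{lm:consistent seq} makes $\omega_f(x)$ finite, and the absence of periodic cut-points forces the only periodic point in that finite invariant set to be $s$, whence convergence. If anything, your write-up is more complete than the paper's at two points: the paper assumes no iterate of $x$ is periodic and immediately invokes consistency of the whole orbit, silently passing over the possibility that some iterate is a \emph{non-periodic endpoint} (the remark's dichotomy yields consistency only when the orbit avoids $\En(T)$), which your pigeonhole count of endpoint visits and passage to a cut-point tail handles explicitly; and your branch argument spells out why subsequential limits cannot be endpoints other than $s$ (equivalently, why $\omega_f(x)\subseteq\Cut(T)\cup\{s\}$), a fact the paper asserts without proof, letting you conclude $\omega_f(x)=\{s\}$ directly instead of deducing convergence from $s\in\omega_f(x)$ and the AFP property.
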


\begin{proof}
Let us suppose that no iterate $f^n(x),n\ge 0$ is periodic and
prove that $f^n(x)\to s, n\to\infty$. According to
Remark~\ref{rm:move_towards} after Lemma~\ref{Lm:UnAttr}, the
sequence $\{f^n(x)\}_{n=0}^{\infty}$ is consistent with $s$.
Therefore, by Lemma~\ref{lm:consistent seq}, the $\omega$-limit
set of $x$ is a finite subset of $\Cut(T)\cup\{s\}$. Once $\Omega$
is finite, it must contain a periodic point, for $\Omega$ is an
invariant set. Once $\Omega\subset \Cut(T)\cup\{s\}$, the only
periodic point it may contain is $s$. So $s\in \Omega$. Then we
immediately get $f^n(x)\to s, n\to\infty$, because $s$ is an AFP.
\end{proof}

Let $f:T\to T$ be a tree map which is free of periodic cut-point,
$s\in\En(T)$ be its unique AFP and $[s,y]$ be the edge of $T$
containing $s$. By the \emph{immediate basin of attraction} of $s$
we mean the open set
$\IB(s)=\Comp(\cup_{n=0}^{\infty}f^{-n}[s,y),s)$. It is not hard
to see that both $\IB(s)$ and $\partial\,\IB(s)$ are invariant
sets. Clearly, if $A\in\Con(T)$ is a subset of the immediate basin
of attraction of $s$, then $f^n(A)\to\{s\}, n\to\infty$. Of
course, the immediate basin of attraction of $s$ does not need to
contain all cut-point of $T$, in other words, $f^n(A)$ does not
need to converge to $\{s\}$ even if $A\subset\Cut(T)$. However, as
we will see, the only way to escape converging to $s$ is to
'cling' to some of other periodic end-points of $T$.

\begin{theorem}\label{Th:NoPerToAsPer} Let $f:T\to T$ be a tree map such that
no cut-point of $T$ is periodic. Then each $A\in\Con(T)$ is
asymptotically periodic under $f$.
\end{theorem}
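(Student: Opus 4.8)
The plan is to dichotomize on whether the forward orbit of $A$ ever meets a periodic point, letting $s\in\En(T)$ be the unique attracting fixed point supplied by Lemma~\ref{Lm:UnAttr}. In the first case, I assume $f^N(A)$ contains a periodic point for some $N\ge0$. Then I would apply Theorem~\ref{Th:PerToAsPer} directly to the continuum $f^N(A)$ to conclude that $f^N(A)$ is asymptotically periodic, i.e.\ $f^{pn}(f^N(A))$ converges for some $p\ge1$, which is convergence of $f^{pn+N}(A)$. Since $f$ induces a continuous self-map of $\Con(T)$, composing with a suitable power of $f$ converts this into convergence of $f^{pm}(A)$, so $A$ itself is asymptotically periodic. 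This is precisely the harmless reduction ``replace $A$ by $f^m(A)$'' already used in the proof of Theorem~\ref{Th:PerToAsPer}.

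In the complementary case I assume no iterate $f^n(A)$ contains a periodic point, and I would first observe that then every point of $A$ converges to $s$: if some $x\in A$ had $f^k(x)$ periodic, then $f^k(A)$ would contain the periodic point $f^k(x)$, contrary to assumption, so no point of $A$ is eventually periodic and Lemma~\ref{Lm:PointAsPer} forces $f^n(x)\to s$ for every $x\in A$. Fixing any $a\in A$ this gives $s\in\liminf f^n(A)$. I would then reduce everything to showing that some iterate of $A$ lands inside the immediate basin of attraction $\IB(s)$: once $f^N(A)\subseteq\IB(s)$, the observation preceding this theorem (that subcontinua of $\IB(s)$ converge to $\{s\}$) yields $f^n(A)\to\{s\}$, which is asymptotic periodicity with period $1$.

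To produce such an $N$, I would argue by contradiction, supposing $f^n(A)\not\subseteq\IB(s)$ for all $n$. Because $\IB(s)$ is an open neighbourhood of $s$ and $s\in\liminf f^n(A)$, the continuum $f^n(A)$ meets $\IB(s)$ for all large $n$; being connected and not contained in $\IB(s)$, it must then meet $\partial\,\IB(s)$. This boundary is finite (being the boundary of a connected subset of a tree) and invariant, so by pigeonhole a single point $p\in\partial\,\IB(s)$ lies in $f^{n_j}(A)$ for infinitely many $n_j$. Since $f$ maps the finite set $\partial\,\IB(s)$ into itself, $p$ is pre-periodic, so $f^m(p)$ is periodic for some $m$; then $f^{n_j+m}(A)$ contains the periodic point $f^m(p)$, contradicting the standing hypothesis of this case. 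Hence $f^N(A)\subseteq\IB(s)$ for some $N$, completing the argument.

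I expect the entire difficulty to sit in this second case: the genuine obstacle is upgrading \emph{pointwise} convergence of orbits to $s$ into \emph{Hausdorff} convergence of the continua $f^n(A)$ to $\{s\}$, which cannot be done directly because the base point moves along with the iterate. The resolution is where one-dimensionality enters decisively, through the finiteness and invariance of $\partial\,\IB(s)$: connectedness forces any stubbornly escaping continuum to repeatedly strike this finite invariant set, whose points are automatically pre-periodic, and that manufactures exactly the periodic point excluded by the hypothesis.
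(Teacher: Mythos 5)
Your proposal is correct and takes essentially the same approach as the paper: the same dichotomy on whether some iterate $f^n(A)$ contains a periodic point, handled via Theorem~\ref{Th:PerToAsPer} in the first case, and in the second case exactly the paper's mechanism of connectedness forcing $f^n(A)$ to strike the finite invariant set $\partial\,\IB(s)$, whose points are pre-periodic. The only differences are organizational (your contradiction-plus-pigeonhole phrasing versus the paper's direct split into subcases), not mathematical.
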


\begin{proof}
Fix any $A\in\Con(T)$. Then, by Lemma~\ref{Lm:PointAsPer}, either
$f^m(A)$ contains a periodic point for some $m$, or $f^m(A)$
intersects the immediate basin of attraction $\IB(s)$ for some
$m$. In the former case $A$ is asymptotically periodic in view of
Theorem~\ref{Th:PerToAsPer}. In the latter one we consider two
subcases: $f^m(A)\subseteq \IB(s)$ and $f^m(A)\nsubseteq \IB(s)$,
but $f^m(A)\cap IB(s)\not=\emptyset$. If $f^m(A)\subseteq \IB(s)$,
then we get $f^n(A)\to\{s\}, n\to\infty$. If $f^m(A)\nsubseteq
\IB(s)$, but $f^m(A)\cap IB(s)\not=\emptyset$, then $f^m(A)$
intersects $\partial \IB(s)$. As we remarked above, $\partial
\IB(s)$ is an invariant set. Moreover, it is finite as boundary of
a connected subset of a tree. So $f^{m+k}(A)$ contains a periodic
point for some $k$ and we, using again
Theorem~\ref{Th:PerToAsPer}, deduce asymptotical periodicity of
$A$.
\end{proof}

\section{Entropy of induced systems for tree
maps}\label{MainAndEntropy}

In this section, using our previous results, we will compute the
topological entropy of connected envelope and functional envelope
of a dynamical system on a tree. Throughout the section we will
regard a tree as a metric, rather than topological, space.

First, we give the following description of the dynamics of
subcontinua of a tree (cf. Proposition in \cite{Matv1}). The proof
just mixes Theorems~\ref{Th:PerToAsPer} and \ref{Th:NoPerToAsPer}.
Given a tree map $f:T\to T$, an element $A\in \Con(T)$ is called
\emph{asymptotically degenerate} under $f$ if $diam\,f^n(A)\to 0,
n\to\infty$, where $diam$ stands for diameter of the set.

\begin{theorem}\label{Tm:Dicho}
Let $f:T\to T$ be a tree map. Then each $A\in \Con(T)$ is either
asymptotically periodic or asymptotically degenerate under $f$ (or
both).
\end{theorem}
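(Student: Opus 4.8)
The plan is to reduce the general tree map to the two special cases already handled. The key structural idea is this: given an arbitrary tree map $f\colon T\to T$, the obstacle to applying Theorems~\ref{Th:PerToAsPer} and \ref{Th:NoPerToAsPer} is the presence of periodic cut-points. If the continuum $A$ ever lands on a periodic point under iteration, we are immediately done by Theorem~\ref{Th:PerToAsPer}. So the substance of the argument is the complementary case, where $f^n(A)$ never meets a periodic point, and there I want to collapse the periodic cut-points so as to fall into the hypotheses of Theorem~\ref{Th:NoPerToAsPer}.

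Concretely, first I would dispose of the easy case: if $f^m(A)$ contains a periodic point of $f$ for some $m\ge 0$, then $A$ is asymptotically periodic by Theorem~\ref{Th:PerToAsPer}, and we are finished. Hence assume no iterate $f^n(A)$ contains a periodic point. The set $P$ of periodic cut-points of $f$ is finite (each periodic point of a given period is a cut-point, and periods of cut-points of a tree map are bounded since the set of cut-points where valence can branch is controlled by $V(T)$; more simply, a tree map has only finitely many periodic points of period dividing any fixed $p$, and one argues the relevant periodic cut-points form a finite invariant set). I would let $M=\overline{\bigcup_{z\in P}\mathcal{O}(z)}$ be the (finite) union of periodic orbits of cut-points, pass to the factor system $(T/_M, f/_M)$ via $\pi_M$, and observe that the quotient is again a tree map. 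The point of this collapse is that in $T/_M$ every former periodic cut-point has been identified into the image of $M$; one then checks that $(T/_M,f/_M)$ has no periodic cut-points, so Theorem~\ref{Th:NoPerToAsPer} applies to $B=\pi_M(A)$, giving that $B$ is asymptotically periodic in the factor. Combining this with Lemma~\ref{lm:1} (using that $M\cap f^n(A)$ behaves controllably because $M$ is a finite invariant set) would lift asymptotic periodicity of $B$ back to $A$.

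The main obstacle I anticipate is precisely the interplay between the two conclusions and the bookkeeping around the collapsed set $M$. When we pass to the quotient $T/_M$, the factor map $f/_M$ need not literally be free of periodic cut-points: new cut-points could arise from the identification, or an endpoint of $T$ could become a periodic cut-point of the quotient. Handling this cleanly, and ensuring that asymptotic periodicity in the factor plus the behavior of $M\cap f^n(A)$ actually yields convergence of $f^{pn}(A)$ in $\Con(T)$ (not merely in the two pieces separately), is the delicate step, and it is exactly where Lemma~\ref{lm:1} must be invoked with care. Rather than fight this, the cleaner route the authors likely take is to invoke the dichotomy directly: by Lemma~\ref{Lm:PointAsPer}-style reasoning each point's orbit either hits a periodic cut-point or converges to an attracting endpoint, so either $f^n(A)$ eventually meets a periodic point (apply Theorem~\ref{Th:PerToAsPer}) or $\mathrm{diam}\,f^n(A)\to 0$, which is exactly asymptotic degeneracy.

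Thus the expected final shape is a short dichotomy: \emph{if} some iterate $f^m(A)$ contains a periodic point, then $A$ is asymptotically periodic by Theorem~\ref{Th:PerToAsPer}; \emph{otherwise} no iterate meets the finite invariant set of periodic (cut-)points, the continua $f^n(A)$ are forced into the immediate basins of attracting endpoints as in Theorem~\ref{Th:NoPerToAsPer}, and a diameter estimate shows $\mathrm{diam}\,f^n(A)\to 0$, i.e. $A$ is asymptotically degenerate. The "both" clause in the statement accounts for the overlap, where a continuum converging to a single fixed endpoint is simultaneously degenerate and periodic of period one.
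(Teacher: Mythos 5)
Your case split is the right one, and your first case (some iterate of $A$ contains a periodic point, hence Theorem~\ref{Th:PerToAsPer} applies) agrees with the paper. The second case, however, contains genuine gaps. First, the set of periodic cut-points of a tree map need not be finite: for the identity map every cut-point is fixed, and for the tent map on $[0,1]$ the periodic cut-points are infinite in number and of unbounded period; so the claim ``a tree map has only finitely many periodic points of period dividing any fixed $p$'' is false, and your set $M$ need not be finite. Second, even when the periodic cut-points do form a finite set, their union of orbits is in general disconnected, and the paper's factor construction and Lemma~\ref{lm:1} require $M\in\Con(T)$; collapsing a disconnected set to one point produces a graph with loops, not a tree. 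Third, and most importantly, even if one collapses a single connected invariant set, the point $\pi_M(M)$ is a fixed point of $f/_M$ and is in general a cut-point of $T/_M$, so the quotient still has a periodic cut-point: the obstruction you are trying to remove simply reappears --- this is exactly the difficulty you flagged yourself but did not resolve. Your fallback argument does not repair it, because the attracting endpoint and its immediate basin (Lemmas~\ref{Lm:UnAttr} and \ref{Lm:PointAsPer}) exist only under the hypothesis that no cut-point is periodic; for a general tree map (e.g. the tent map, whose fixed endpoint is repelling) there may be no attracting endpoint at all, so ``the continua $f^n(A)$ are forced into the immediate basins of attracting endpoints'' has no meaning in the generality in which you invoke it.

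The idea you are missing is that the paper shrinks the tree to the orbit of $A$ instead of collapsing periodic points. If all iterates $f^n(A)$ are pairwise disjoint, then $A$ is asymptotically degenerate (for each $\varepsilon>0$ a tree admits only finitely many pairwise disjoint subcontinua of diameter at least $\varepsilon$), and this case is done. Otherwise $f^k(A)\cap f^m(A)\not=\emptyset$ for some $m>k\ge0$; replacing $A$ by $f^k(A)$ and $f$ by $f^{m-k}$ (harmless for asymptotic periodicity), one may assume $A\cap f(A)\not=\emptyset$, so that $\bigcup_{n\ge0}f^n(A)$ is connected and invariant, and one restricts to the subsystem on the subtree $T'=\overline{\bigcup_{n\ge0}f^n(A)}$. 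The crucial observation is that the closure of a connected subset of a tree adds only endpoints, hence every cut-point of $T'$ already lies in some $f^n(A)$. Therefore either $T'$ has a periodic cut-point, which then lies in some iterate of $A$ and Theorem~\ref{Th:PerToAsPer} gives asymptotic periodicity, or $T'$ has no periodic cut-point at all and Theorem~\ref{Th:NoPerToAsPer} applies verbatim to $(T',f|_{T'})$. This restriction-to-the-orbit device is what makes the hypothesis of Theorem~\ref{Th:NoPerToAsPer} attainable; no quotient is needed, and the asymptotically degenerate alternative enters only through the pairwise-disjoint case.
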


\begin{proof}
Fix $A\in \Con(T)$. If all iterates $f^n(A)$ are pairwise
disjoint, then obviously $A$ is asymptotically degenerate. So, we
assume that $f^k(A)\cap f^m(A)\not=\emptyset$ for some $m>k\ge 0$.
Replacing $A$ with $f^k(A)$ and $f$ with $f^{m-k}$ we can assume
that $A\cap f(A)\not=\emptyset$. Then the set $\cup_{n\ge
0}f^n(A)$ is an invariant connected subset of $T$. Passing to the
subspace we can assume that $T=\overline{\cup_{n\ge 0}f^n(A)}$.
Now, if there is a periodic cut-point in $T$, then it belongs to
some $f^n(A)$, and thus by Theorem~\ref{Th:PerToAsPer} $A$ is
asymptotically periodic. On the other hand, if no cut-point of $T$
is periodic, then by Theorem~\ref{Th:NoPerToAsPer} $A$ is
asymptotically periodic, too.
\end{proof}

The notion of \emph{topological entropy} of a system on a compact
topological space was introduced by Adler, Konheim and McAndrew in
\cite{AKM} as a measure of chaotic character of a dynamical
system. In this paper we will use the Bowen-Dinaburg's definitions
of the topological entropy (see e.g. \cite{Bow}) for systems on
compact metric spaces, which agree with Adler-Konheim-McAndrew's
one for systems on topological metrizable spaces. Let $(X,\rho)$
be a compact metric space and let $f: X\to X$ be a map. Fix
$n\ge1$ and $\varepsilon>0$. Consider another metric $\rho^{(n)}$
which takes into account the distance between the respective $n$
initial iterates of points, namely put $\rho^{(n)}(x,y)=\max_{0\le
j<n}\rho(f^j(x),f^j(y))$. A subset $E$ of $X$ is called
\emph{$(n,f,\varepsilon)$--separated} if for every two different
points $x,y \in E$ it holds $\rho^{(n)}(x,y)>\varepsilon$. We say
that a subset $F \subset X$ {\em $(n,f,\varepsilon)$--spans} $X$,
if for every $x \in X$ there is $y \in F$ for which
$\rho^{(n)}(x,y)\le\varepsilon$.

We by $sep\,(n,f,\varepsilon)$ denote the maximal possible
cardinality of an $(n,f,\varepsilon)$-separated set in $X$, and by
$span\,(n,f,\varepsilon)$ the minimal possible cardinality of a
set which $(n,f,\varepsilon)$-spans $X$.

Then the topological entropy of $f$ is defined by
\begin{eqnarray*}
h(f) = \lim_{\varepsilon\to 0} \limsup_{n\to \infty}\frac{1}{n}\,
\log sep \,(n,f,\varepsilon) = \lim_{\varepsilon \to
0}\limsup_{n\to \infty}\frac{1}{n}\, \log span\,(n,f,\varepsilon)
\end{eqnarray*}

The following well-known lemma (see for example \cite{ALM}) shows
a way of computation of entropy when a system can be divided into
the smaller subsystems.

\begin{lemma}\label{Lm:ALM}
If $X=\bigcup_{\alpha\in A}X_{\alpha}$ where each $X_{\alpha}$ is
closed and invariant set then $h(f)=\sup_{\alpha\in
A}h(f|_{X_{\alpha}})$.
\end{lemma}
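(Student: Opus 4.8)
The plan is to prove the two inequalities $\sup_{\alpha\in A}h(f|_{X_{\alpha}})\le h(f)$ and $h(f)\le\sup_{\alpha\in A}h(f|_{X_{\alpha}})$ separately. The first is routine monotonicity of entropy under passage to a closed invariant subsystem, and it needs nothing about the index set: for a fixed $\alpha$, every $(n,f|_{X_{\alpha}},\varepsilon)$-separated subset of $X_{\alpha}$ is, verbatim, an $(n,f,\varepsilon)$-separated subset of $X$, since $X_{\alpha}$ is invariant and $f|_{X_{\alpha}}$ is the restriction of $f$, so the metric $\rho^{(n)}$ and the first $n$ iterates agree. Hence $sep\,(n,f|_{X_{\alpha}},\varepsilon)\le sep\,(n,f,\varepsilon)$ for all $n$ and $\varepsilon$, giving $h(f|_{X_{\alpha}})\le h(f)$; taking the supremum over $\alpha$ yields the first inequality.

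The content is the reverse inequality, and here the obstacle is precisely that $A$ may be \emph{uncountable}: the elementary counting that settles a finite cover (distribute a maximal separated set among the pieces by pigeonhole, paying only a factor $\tfrac1n\log|A|\to 0$) collapses when each separated point can sit in its own $X_{\alpha}$. To circumvent this I would abandon separated-set counting and invoke the classical variational principle, exploiting that a closed invariant set meeting an orbit must contain its whole closure. Concretely, write $h(f)=\sup\{h_{\mu}(f):\mu\ \text{ergodic}\}$ (the variational principle, with ergodic decomposition restricting the supremum to ergodic measures), which is legitimate since the Bowen--Dinaburg entropy used here coincides with the usual topological entropy. Fix an ergodic $f$-invariant Borel probability measure $\mu$. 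By the Birkhoff ergodic theorem (and separability of $C(X)$), $\mu$-almost every $y\in\mathrm{supp}\,\mu$ is generic for $\mu$, so the empirical measures $\tfrac1n\sum_{k=0}^{n-1}\delta_{f^{k}(y)}$ converge weakly to $\mu$. For such a $y$ the portmanteau property for open sets gives $\mathrm{supp}\,\mu\subseteq\overline{\mathcal O(y)}$, while $\overline{\mathcal O(y)}\subseteq\mathrm{supp}\,\mu$ because $\mathrm{supp}\,\mu$ is closed, invariant and contains $y$; hence $\overline{\mathcal O(y)}=\mathrm{supp}\,\mu$.

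Now the cover finishes the argument regardless of the cardinality of $A$. Since the $X_{\alpha}$ cover $X$, there is some $\alpha$ with $y\in X_{\alpha}$, and as $X_{\alpha}$ is closed and invariant it contains $\overline{\mathcal O(y)}=\mathrm{supp}\,\mu$, so $\mu$ is carried by $X_{\alpha}$ (note $X_{\alpha}$ is compact, being closed in the compact $X$). Measure-theoretic entropy depends only on the action on a set of full measure, so $h_{\mu}(f)=h_{\mu}(f|_{X_{\alpha}})$, and the variational principle applied to the subsystem $(X_{\alpha},f|_{X_{\alpha}})$ gives $h_{\mu}(f|_{X_{\alpha}})\le h(f|_{X_{\alpha}})\le\sup_{\beta\in A}h(f|_{X_{\beta}})$. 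Taking the supremum over ergodic $\mu$ and using the variational principle for $(X,f)$ yields $h(f)\le\sup_{\alpha\in A}h(f|_{X_{\alpha}})$, which together with the first inequality proves the lemma.

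The single genuinely delicate point I expect to be the main obstacle is the identity $\overline{\mathcal O(y)}=\mathrm{supp}\,\mu$ for a generic $y$ in the support, since it is exactly what forces each ergodic measure to live entirely inside one member of the cover even when $A$ is uncountable; everything else is bookkeeping with the variational principle and with the verbatim comparison of separated sets.
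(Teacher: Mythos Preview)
The paper does not actually prove this lemma: it is stated as well known and the reader is referred to \cite{ALM} for a proof, so there is no argument in the paper to compare yours against. Your proof via the variational principle is correct and is one of the standard routes when the index set $A$ may be uncountable; the key step---that the support of an ergodic measure equals the orbit closure of a generic point in the support, and is therefore trapped in a single closed invariant $X_{\alpha}$---is exactly the right mechanism, and the remaining bookkeeping with the variational principle on both $(X,f)$ and $(X_{\alpha},f|_{X_{\alpha}})$ is sound.
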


Recall that $\Con(X)$ denotes the space of all subcontinua of $X$
endowed with the Hausdorff metric. Given a dynamical system
$(X,f)$, by its \emph{connected envelope} we mean the system
$(\Con(X),\mathcal{F})$, where $\mathcal{F}:\Con(X)\to\Con(X)$ is
given by $\mathcal{F}(A)=f(A)$, where, as usual, $f(A)$ denotes
the set of all $f(x)$, $x\in A$. Clearly, the system
$(\Con(X),\mathcal{F})$ contains a copy of the original system
$(X,f)$ (consider the subspace of all singletons $\{x\}$, $x\in
X$). In \cite{Matv1} it was proved that topological entropy of an
interval dynamical system is equal to that of its connected
envelope. In \cite{KwOp} the same was proved for transitive
systems on graphs. Our next theorem establishes this equality for
any dynamical system on a tree.

\begin{theorem}\label{Tm:EntrCon}
Let $(T,f)$ be dynamical system on a tree and
$(\Con(T),\mathcal{F})$ be its connected envelope. Then
$h(\mathcal{F})=h(f)$.
\end{theorem}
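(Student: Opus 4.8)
The plan is to prove $h(\mathcal{F})=h(f)$ by establishing the two inequalities separately. The inequality $h(\mathcal{F})\ge h(f)$ is immediate and requires no new work: the map $x\mapsto\{x\}$ embeds $(T,f)$ as a closed invariant subsystem of $(\Con(T),\mathcal{F})$ (the singletons form a copy of $T$, and $\mathcal{F}$ restricted to them is conjugate to $f$), and topological entropy never decreases on passing to a subsystem. So the entire content of the theorem lies in the reverse inequality $h(\mathcal{F})\le h(f)$.

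For $h(\mathcal{F})\le h(f)$ the strategy is to decompose $\Con(T)$ into finitely many $\mathcal{F}$-invariant pieces and apply Lemma~\ref{Lm:ALM}. The key structural input is the dichotomy of Theorem~\ref{Tm:Dicho}: every $A\in\Con(T)$ is asymptotically periodic, asymptotically degenerate, or both. I would first try to handle the \emph{asymptotically degenerate} elements: if $A$ satisfies $\mathrm{diam}\,f^n(A)\to 0$, then for large $n$ the continuum $f^n(A)$ is Hausdorff-close to a singleton $\{f^n(x_A)\}$, so the $\mathcal{F}$-orbit of $A$ is shadowed by the $f$-orbit of a point of $T$. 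The idea is that separating $(n,\mathcal{F},\varepsilon)$-orbits of such sets costs no more (up to a bounded factor and a controlled $\varepsilon$-loss) than separating $(n,f,\varepsilon/2)$-orbits of points, so the degenerate part contributes at most $h(f)$ to the entropy. The \emph{asymptotically periodic} elements should be handled by the observation that if $f^{pn}(A)$ converges, then the orbit of $A$ eventually tracks a finite periodic cycle of continua together with transient behavior that itself projects to point-dynamics; again one bounds the spanning count by that of $f$.

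The main obstacle I expect is converting the purely topological dichotomy of Theorem~\ref{Tm:Dicho} into a \emph{uniform} metric statement suitable for entropy estimates. Entropy is a metric quantity measured by $sep$ and $span$ at scale $\varepsilon$ over the first $n$ iterates, whereas ``asymptotically degenerate'' and ``asymptotically periodic'' are asymptotic, $A$-by-$A$ conditions with no uniform rate. The difficulty is that the time $n$ at which $\mathrm{diam}\,f^n(A)$ drops below a given $\delta$, or at which $f^{pn}(A)$ is within $\delta$ of its limit, may depend wildly on $A$; without uniformity one cannot package the sets into finitely many controlled families. The technical heart of the proof will therefore be to extract, at each scale $\varepsilon$, a finite covering of $\Con(T)$ by invariant (or eventually-invariant) sets on each of which the relevant convergence is uniform, so that Lemma~\ref{Lm:ALM} reduces $h(\mathcal{F})$ to a supremum of subsystem entropies each bounded by $h(f)$.

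Concretely, I would assemble the argument as follows. First, establish the trivial lower bound via the singleton embedding. Second, prove a lemma of the form: for a continuum $A$ with $\mathrm{diam}\,f^n(A)$ small, the quantity $span\,(n,\mathcal{F},\varepsilon)$ counting such orbits is dominated by $span\,(n,f,c\varepsilon)$ of the underlying point system, using that Hausdorff distance between near-degenerate continua is governed by distance between their approximating points. Third, show that the asymptotically periodic part carries entropy $\le h(f)$, exploiting that a convergent sequence $f^{pn}(A)$ contributes no exponential growth beyond that already present in $f$ (the limit continua form, essentially, periodic orbits in $\Con(T)$ which have zero entropy, and the transient is controlled by $f$). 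Finally, combine these over the finitely many invariant pieces supplied by the uniformization step and conclude $h(\mathcal{F})\le h(f)$ via Lemma~\ref{Lm:ALM}, which together with the lower bound gives equality.
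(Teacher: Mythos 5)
Your lower bound and the general shape of the upper bound (Theorem~\ref{Tm:Dicho} combined with Lemma~\ref{Lm:ALM}) match the paper, but there is a genuine gap: the ``uniformization step'' that you yourself identify as the technical heart is never carried out, and in fact it cannot be carried out as stated. You ask for a decomposition of $\Con(T)$ into \emph{finitely many} closed invariant pieces on each of which the convergence (to a degenerate continuum, or to a periodic cycle of continua) is uniform. Such a finite decomposition need not exist, even for zero-entropy maps: take $T=[0,1]$ and $f(x)=x^2$. Every subcontinuum $[a,b]$ with $b<1$ is asymptotically degenerate, but the time needed for $\mathrm{diam}\,f^n([a,b])$ to drop below $1/2$ tends to infinity as $b\to 1$; hence in any finite cover of $\Con(T)$ by invariant sets, some piece contains a sequence $[0,b_k]$ with $b_k\uparrow 1$, $b_k<1$, and no uniform rate holds on that piece at any fixed scale $\varepsilon$. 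So a proof organized around finitely many uniformly controlled families is stuck exactly at the point you flagged, and the auxiliary estimates you sketch (the shadowing lemma for near-degenerate continua, the zero-entropy claim for the periodic part) have nothing to hang on.

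The missing idea is that no uniformity is needed at all, because Lemma~\ref{Lm:ALM} is stated (and true) for an \emph{arbitrary} family of closed invariant sets, not only a finite one. The paper decomposes $\Con(T)=\bigcup_{A\in\Con(T)}N_A$ into the individual orbit closures $N_A=\overline{\{f^n(A):n\ge 0\}}$ --- an uncountable family --- and obtains $h(\mathcal{F})=\sup_{A}h(\mathcal{F}|_{N_A})$. The dichotomy of Theorem~\ref{Tm:Dicho} is then applied to one $A$ at a time, so rates across different continua are irrelevant: if $A$ is asymptotically periodic, $N_A$ consists of the orbit of $A$ together with a periodic cycle of continua, and $h(\mathcal{F}|_{N_A})=0$ directly from the definition of entropy; if $A$ is asymptotically degenerate, then $\omega_{\mathcal{F}}(A)\subseteq T_{sing}=\{\{x\}:x\in T\}$, and since the entropy of an orbit closure equals the entropy on its $\omega$-limit set, $h(\mathcal{F}|_{N_A})=h(f|_{\omega_{\mathcal{F}}(A)})\le h(\mathcal{F}|_{T_{sing}})=h(f)$. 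Your two per-case observations are in the right spirit and essentially become these one-line remarks once the decomposition is taken per orbit closure; as your plan is organized, however, they cannot be assembled into a proof, because the finite uniform decomposition they require does not exist.
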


\begin{proof}
The proof is based on Theorem~\ref{Tm:Dicho} and
Lemma~\ref{Lm:ALM}. Consider the family of closed invariant sets
$\{N_A\}_{A\in \Con(T)}$ where $N_A=$
$\overline{\{f^n(A):n\ge0\}}$. Since each $A\in N_A$, $\Con(T)$ is
the union of all $N_A$. Now, we can apply Lemma~\ref{Lm:ALM}:
$$h({\mathcal F})=\sup_{A\in \Con(T)}h({\mathcal
F}|_{N_A}).$$

Let $A\in \Con(T)$ is given. If $A$ is asymptotically periodic,
then it can be derived directly from the definition of the
topological entropy that $h({\mathcal F}|_{N_A})=0$. Otherwise, by
Theorem~\ref{Tm:Dicho}, $A$ is asymptotically degenerate. So, the
$\omega$-limit set $\omega_{{\mathcal F}}(A)$ is a subset of
$T_{sing}:=\{\{x\}:x\in T\}$. Thus $h({\mathcal
F}|_{N_A})=h(f|_{\omega_{{\mathcal F}}(A)})\le h({\mathcal
F}|_{T_{sing}})=h(f)$.

We see that $h({\mathcal F}|_{N_A})\le h(f)$ for every $A\in
\Con(T)$. In view of Lemma~\ref{Lm:ALM} this implies inequality
$h({\mathcal F})\le h(f)$. The converse inequality holds, because
$(T,f)$ is a subsystem of $(\Con(T),{\mathcal F})$.
\end{proof}

Recall that the \emph{Hausdorff distance} between two sets $A_1$
and $A_2$ in a metric space $X$ is given by
$d_H(A_1,A_2)=\inf\{\varepsilon>0 : A_1 \subseteq
\overline{B}(A_2,\varepsilon)\,\textnormal{and}\,A_2 \subseteq
\overline{B}(A_1,\varepsilon)\}$ where
$\overline{B}(A,\varepsilon)$ denotes the union of all closed
balls of radius $\varepsilon > 0$ whose centres run over $A$. This
is a metric on the family of all bounded, nonempty closed subsets
of $X$. As we remarked above, the Hausdorff metric generates the
same topology on $\Con(X)$ as that given by $\liminf$ and
$\limsup$.

Recall the definition of a functional envelope of a dynamical
system  (see \cite{AKoS}). For the general references see
\cite{Kol,Rom,ShMR,ShR}. Given a metric space $(X,\rho)$, denote
the set of all continuous maps $X\to X$ by $S(X)$. We endow the
space $S(X)$ with the Hausdorff metric $\rho_H$ (derived from the
metric $\rho_{max}((x_1,y_1),(x_2,y_2))=\max\{\rho(x_1,x_2),$
$\rho(y_1,y_2)\}$ in $X\times X$) applied to the graphs of maps.
Denote the corresponding metric space by $S_H(X)$. Given a
dynamical system $(X,f)$, consider the uniformly continuous map
$F:S_H(X)\to S_H(X)$ defined by $F(\varphi)=f\circ\varphi$ (first
apply $\varphi$) for any $\varphi\in S_H(X)$. The space $S_H(X)$
is \emph{not} compact (because it is not complete). However, if we
view $S_H(X)$ as a subset of the space of all closed subsets of
$X\times X$ endowed with the Hausdorff metric, then the closure
$\overline{S_H(X)}$ will be compact. The uniformly continuous map
$F$ can be uniquely extended to a continuous selfmap of a compact
metric space $\overline{S_H(X)}$. We will denote this map by the
same letter $F$ as well; that is
$F:\overline{S_H(X)}\to\overline{S_H(X)}$. The system
$(\overline{S_H(X)},F)$ is called a \emph{functional envelope} of
$(X,f)$. Again, as in the case of connected envelope, the system
$(\overline{S_H(X)},F)$ contains a copy of the original system
$(X,f)$ (consider the subspace of all constant maps).

If $X=T$ is a tree, then the extension $F:\overline{S_H(T)}\to
\overline{S_H(T)}$ can be described precisely in the following
way. Recall that a set-valued map $M:T\to T$ is \emph{upper
semicontinuous} if for every point $x\in T$ and every open subset
$V$ of $T$ such that $V\supseteq M(x)$ the set $\{y\in
T:M(y)\subseteq V\}$ contains a neighbourhood of $x$. One can
prove that $\overline{S_H(T)}$ consists of graphs of all
set-valued maps $T\to T$ which have nonempty connected, compact
values and are upper semicontinuous, and the extension
$F:\overline{S_H(T)}\to \overline{S_H(T)}$ is given by
$F(\varphi)=\mathcal{F}\circ\varphi$ for any $\varphi\in
\overline{S_H(T)}$.

In \cite{Matv1} it was proved that if an interval dynamical system
has zero topological entropy, then so does its functional
envelope. Now, we are going to prove the generalization of this
result for dynamical systems on trees. To do this, we need the
following estimates on the numbers used in the definitions of
topological entropy.

\begin{lemma}\label{Lm:Ineq}
Let $(T,f)$ be dynamical system on a tree, $(\Con(T),\mathcal{F})$
be its connected envelope and $(\overline{S_H(T)},F)$ be its
functional envelope. Then for any $\varepsilon>0$, $n\ge 1$ it
holds
$$
sep\left(n,f,\varepsilon\right)^{N_1(\varepsilon)}\le
sep\left(n,F,\varepsilon\right)\le
span\left(n,\mathcal{F},\varepsilon/2\right)^{N_2(\varepsilon)},
$$
for some numbers $N_{1,2}(\varepsilon)$ which do not depend on $n$
and $N_{1,2}(\varepsilon)\to+\infty, \varepsilon\to 0+$.
\end{lemma}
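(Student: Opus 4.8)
The plan is to establish the two inequalities separately, in each case by exploiting the fact that a continuous (or upper semicontinuous) map $T\to T$ is determined, up to $\varepsilon$-error in the Hausdorff metric on its graph, by its values on a finite $\varepsilon$-net of the compact space $T$. Throughout I fix, for each $\varepsilon>0$, a finite $\varepsilon$-net $P_\varepsilon=\{p_1,\dots,p_{N(\varepsilon)}\}\subseteq T$; since $T$ is compact one may take $N(\varepsilon)\to\infty$ as $\varepsilon\to 0+$, and the two constants $N_1,N_2$ will both be built from the cardinality of such nets (with $N_1\le N_2$ after a minor adjustment). The key observation relating the metrics is that for graphs, $\rho_H(\varphi,\psi)$ is comparable to $\max_i d(\varphi(p_i),\psi(p_i))$ measured in $\Con(T)$ via the Hausdorff metric on values, up to an additive error controlled by the mesh of the net and a modulus of (semi)continuity.

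For the \textbf{lower} bound $sep(n,f,\varepsilon)^{N_1(\varepsilon)}\le sep(n,F,\varepsilon)$, I would start from a maximal $(n,f,\varepsilon)$-separated set $E\subseteq T$ in the base system and manufacture many separated \emph{constant} maps. Given any function $\sigma:P_{\varepsilon}'\to E$ defined on a suitable finite net $P'_\varepsilon$ of size $N_1(\varepsilon)$, I build a map $\varphi_\sigma\in S_H(T)$ that sends each net point $p_i$ near $\sigma(p_i)$ (e.g.\ a piecewise-defined interpolation that is genuinely continuous on the tree). Two such maps $\varphi_\sigma,\varphi_\tau$ differ at some net point $p_i$ where $\sigma(p_i)\ne\tau(p_i)$, hence their images under $f^j$ differ by more than $\varepsilon$ for some $j<n$ because $\sigma(p_i),\tau(p_i)$ are $(n,f,\varepsilon)$-separated; this forces $\rho_H^{(n)}(\varphi_\sigma,\varphi_\tau)>\varepsilon$. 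Counting the $sep(n,f,\varepsilon)^{N_1(\varepsilon)}$ distinct choices of $\sigma$ yields the bound.

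For the \textbf{upper} bound $sep(n,F,\varepsilon)\le span(n,\mathcal F,\varepsilon/2)^{N_2(\varepsilon)}$, I would go the other way: given an $(n,F,\varepsilon)$-separated set $\mathcal E\subseteq\overline{S_H(T)}$, I encode each $\varphi\in\mathcal E$ by the tuple of its values $(\varphi(p_1),\dots,\varphi(p_{N_2}))\in\Con(T)^{N_2}$ on a net $P_\varepsilon$ of size $N_2(\varepsilon)$, and then replace each coordinate value by a nearby element of an $(n,\mathcal F,\varepsilon/2)$-spanning set $F_{\mathcal F}$ of $\Con(T)$. The point is that if two maps $\varphi,\psi$ receive the \emph{same} spanning tuple, then at every net point their orbits under $\mathcal F$ stay within $\varepsilon/2$ for the first $n$ iterates, and upper semicontinuity together with the finite mesh propagates this to $\rho_H^{(n)}(\varphi,\psi)\le\varepsilon$, contradicting separation. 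Hence the map $\varphi\mapsto$(spanning tuple) is injective on $\mathcal E$, giving $|\mathcal E|\le |F_{\mathcal F}|^{N_2(\varepsilon)}=span(n,\mathcal F,\varepsilon/2)^{N_2(\varepsilon)}$.

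The \textbf{main obstacle} will be the upper bound, specifically making rigorous the claim that agreement of the $\mathcal F$-orbits at the finitely many net points (to within $\varepsilon/2$) controls the full Hausdorff distance between the iterated \emph{graphs} $F^j(\varphi)$ and $F^j(\psi)$ (to within $\varepsilon$). This requires a uniform estimate: because all maps in $\overline{S_H(T)}$ are upper semicontinuous with connected compact values, one must bound the oscillation of $\varphi$ between adjacent net points using the arcwise (tree) structure, and then check that this oscillation is not amplified uncontrollably under composition with $\mathcal F=f\circ(\cdot)$ over $n$ steps. I would handle this by choosing the net $P_\varepsilon$ fine enough relative to a modulus of continuity of $f$ so that the per-step error is absorbed into the switch from $\varepsilon/2$ to $\varepsilon$; the connectedness of the values is precisely what lets a finite net of $T$ detect the whole graph up to the prescribed Hausdorff error, which is where dimension one is essential.
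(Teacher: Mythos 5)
Both halves of your sketch founder on the same underlying point: neither pointwise differences nor pointwise agreements of maps control the Hausdorff distance between their \emph{graphs}, and your proposed fixes do not repair this. For the lower bound, your claim that $\sigma(p_i)\neq\tau(p_i)$ (with the values $(n,f,\varepsilon)$-separated) ``forces $\rho_H^{(n)}(\varphi_\sigma,\varphi_\tau)>\varepsilon$'' is false for interpolated continuous maps: the graph of $f^j\circ\varphi_\tau$ can pass within $\varepsilon$ of the point $\bigl(p_i,f^j(\varphi_\sigma(p_i))\bigr)$ at some first coordinate $x\neq p_i$, because the interpolation sweeps through intermediate values near $p_i$. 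To convert a pointwise difference into a graph difference you must force each competing map to be constant on a whole window of radius greater than $\varepsilon$ around the witness point. This is exactly what the paper does: it fixes one edge $I\cong[0,1]$, takes $K=[\frac{1}{2\varepsilon}]-1$ subintervals, and uses \emph{multivalued step maps} in $\overline{S_H(T)}$ which equal $\{y_j\}$ on the $j$-th open subinterval and equal $T$ at the division points and off $I$; the midpoint of the $j$-th subinterval is then at first-coordinate distance $\frac{1}{2K}>\varepsilon$ from everything outside that subinterval, so differing choices of $y_j$ genuinely give $\rho_H^{(n)}>\varepsilon$, yielding $sep(n,F,\varepsilon)\ge sep(n,f,\varepsilon)^{K}$. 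Note these witnesses are not continuous maps at all; insisting on $S_H(T)$, as you do, requires extra care with transition regions that your sketch does not provide.

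For the upper bound the gap is more serious: you encode $\varphi\in\overline{S_H(T)}$ by its values at finitely many net \emph{points} and need agreement there to propagate to closeness of graphs. But elements of $\overline{S_H(T)}$ admit no uniform modulus of continuity --- the closure contains genuinely discontinuous upper semicontinuous maps (for instance the step maps just described), which can agree at every net point and yet be Hausdorff-far in between. Upper semicontinuity plus connectedness of values cannot bound the oscillation between net points, and the modulus of continuity of $f$ is irrelevant since the wild behaviour is already present at the zeroth iterate; so the ``propagation'' step you flag as the main obstacle is not just hard, it is unattainable in this form. The paper's remedy is to encode $\varphi$ not by values at points but by the images $\varphi(T_k)$ of the members of a finite cover $\{T_k\}_{k=1}^{N}$ of $T$ by \emph{continua of diameter less than} $\varepsilon$: the tuple $\bigl(\varphi(T_1),\dots,\varphi(T_N)\bigr)\in\Con(T)^N$ records all values of $\varphi$ over each small window, so $d_H^{(n)}(\varphi(T_k),\psi(T_k))\le\varepsilon$ for every $k$ implies $\rho_H^{(n)}(\varphi,\psi)\le\varepsilon$ with no continuity hypothesis whatsoever (here connectedness of $T_k$ is what puts $\varphi(T_k)$ into $\Con(T)$, so that the spanning set for $\mathcal{F}$ applies). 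A coordinatewise pigeonhole over an $(n,\mathcal{F},\varepsilon/2)$-spanning set of $\Con(T)$ then gives $sep(n,F,\varepsilon)\le span(n,\mathcal{F},\varepsilon/2)^{N}$. Replacing your point-nets by this cover-by-small-continua device, and your interpolations by locally constant step maps, is precisely what turns your outline into a correct proof.
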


\begin{proof}
Fix $\varepsilon>0$ and $n\ge 1$. First, let us prove the
right-hand inequality. Let $\{T_k\}_{k=1}^N$ be a cover of $T$
with continua of diameter less than $\varepsilon$. Then for each
pair $\varphi,\psi\in \overline{S_H(T)}$ the inequality
$\rho_H(\varphi,\psi)>\varepsilon$ implies
$d_H(\varphi(T_k),\psi(T_k))>\varepsilon$ for some $k$ (here $d_H$
denotes the Hausdorff metric on the space $\Con(T)$ and $\rho_H$
denotes the Hausdorff metric on the space $\overline{S_H(T)}$).
Moreover, $\rho_H^{(n)}(\varphi,\psi)>\varepsilon$ implies
$d_H^{(n)}(\varphi(T_k),\psi(T_k))>\varepsilon$ for some $k$. Now,
suppose that there is an $(n,F,\varepsilon)$-separated set $E_0$
of cardinality $M^N+1$ where $M$ is minimal possible cardinality
of a set in $\Con(T)$ which $(n,\mathcal{F},\varepsilon/2)$-spans
$\Con(T)$. Consecutively, for each $1\le k\le N$, by Dirichlet's
box principle, we take a subset $E_k\subset E_{k-1}$ of
cardinality $M^{N-k}+1$ such that
$d_H^{(n)}(\varphi(T_k),\psi(T_k))\le\varepsilon$. On the last
step we get a set $E_N\subset E_0$ which contains two different
elements $\varphi,\psi$ such that
$d_H^{(n)}(\varphi(T_k),\psi(T_k))\le\varepsilon$ for each $1\le k
\le N$. This implies $\rho_H^{(n)}(\varphi,\psi)\le\varepsilon$, a
contradiction to the fact that $E_0$ is
$(n,F,\varepsilon)$-separated set. Thus the maximal possible
cardinality of an $(n,F,\varepsilon)$-separated set is less than
or equal to $M^N$. We put $N_2(\varepsilon)=N$.

Now, we are going to prove the left-hand inequality. Let
$I\subseteq T$ be an edge in $T$. For convenience, we assume that
$I=[0,1]$. Let $x_k=\frac{k}{K}, 0\le k\le K$ where
$K=[\frac{1}{2\varepsilon}]-1$. (It suffices to prove the
inequality for small enough $\varepsilon$, so we can assume that
$K\ge 1$.) Let $F$ be an $(n,f,\varepsilon)$-separated set in $T$
of the maximal possible cardinality. For any $K$-tuple
$\overline{y}=(y_1,y_2,\dots,y_K)$ of elements of $F$ we define
the (multivalued) map $\varphi_{\overline{y}}\in
\overline{S_H(T)}$ by
\begin{itemize}
    \item $\varphi_{\overline{y}}(x) = \{y_j\}$, if
$x\in(x_{j-1},x_{j})\subset I$, for some $1\le j\le K$,
    \item $\varphi_{\overline{y}}(x)
=T$, if $x=x_{j}$ for some $0\le j\le K$, or $x\in T\setminus I$.
\end{itemize}
One can see that collection
$\{\varphi_{\overline{y}}\}_{\overline{y}\in F^K}$ forms an
$(n,F,\varepsilon)$-separated set in $\overline{S_H(T)}$. Thus
$sep\left(n,F,\varepsilon\right)\ge |F|^K=
sep\left(n,f,\varepsilon\right)^K$. We put ${N_1(\varepsilon)}=K$.
\end{proof}

\begin{theorem}\label{Tm:EntrFun}
Let $(T,f)$ be dynamical system on a tree and $(S_H(T),F)$ be its
functional envelope.
\begin{enumerate}
    \item If $h(f)=0$, then $h(F)=0$.
    \item If $h(f)>0$, then $h(F)=+\infty$.
\end{enumerate}
\end{theorem}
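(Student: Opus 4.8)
The plan is to deduce both parts of Theorem~\ref{Tm:EntrFun} from the two-sided estimate established in Lemma~\ref{Lm:Ineq}, passing through the entropy equality $h(\mathcal{F})=h(f)$ of Theorem~\ref{Tm:EntrCon}. The key is that Lemma~\ref{Lm:Ineq} sandwiches $sep(n,F,\varepsilon)$ between a power of $sep(n,f,\varepsilon)$ and a power of $span(n,\mathcal{F},\varepsilon/2)$, where in both cases the exponent $N_{1,2}(\varepsilon)$ is independent of $n$ and tends to $+\infty$ as $\varepsilon\to 0+$. Taking $\frac{1}{n}\log$, letting $n\to\infty$, the $n$-independent exponents factor out cleanly; the subtlety is that they blow up in the subsequent limit $\varepsilon\to 0$, and this is precisely what forces the dichotomy between the two cases rather than yielding a plain equality $h(F)=h(f)$.

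For part (1), suppose $h(f)=0$. First I would take $\frac{1}{n}\log$ of the right-hand inequality $sep(n,F,\varepsilon)\le span(n,\mathcal{F},\varepsilon/2)^{N_2(\varepsilon)}$ and apply $\limsup_{n\to\infty}$, obtaining
\[
\limsup_{n\to\infty}\frac{1}{n}\log sep(n,F,\varepsilon)\le N_2(\varepsilon)\cdot\limsup_{n\to\infty}\frac{1}{n}\log span(n,\mathcal{F},\varepsilon/2).
\]
Because $N_2(\varepsilon)$ does not depend on $n$ it pulls out of the $\limsup$. By Theorem~\ref{Tm:EntrCon} we have $h(\mathcal{F})=h(f)=0$, so for the spanning quantity the double limit vanishes. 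The point I would stress here is that one cannot simply let $\varepsilon\to 0$ while keeping $N_2(\varepsilon)$ fixed, since $N_2(\varepsilon)\to\infty$; instead, fix $\varepsilon>0$ first, so that $N_2(\varepsilon)$ is a finite constant, conclude $\limsup_n \frac{1}{n}\log sep(n,F,\varepsilon)\le N_2(\varepsilon)\cdot C(\varepsilon)$ where $C(\varepsilon)=\limsup_n\frac{1}{n}\log span(n,\mathcal{F},\varepsilon/2)$, and then observe that since $h(\mathcal{F})=0$ the function $C(\varepsilon)\to 0$ as $\varepsilon\to 0$; one must argue that $N_2(\varepsilon)C(\varepsilon)\to 0$. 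This is the delicate product-of-limits step and is the main obstacle: one needs $C(\varepsilon)$ to vanish fast enough to beat the divergence of $N_2(\varepsilon)$. I would handle this by noting that zero entropy means $C(\varepsilon)=0$ for each fixed $\varepsilon>0$ (not merely in the limit) is too strong; rather, the cleaner route is to observe that $h(F)=\lim_{\varepsilon\to0}\limsup_n\frac1n\log sep(n,F,\varepsilon)$ and that for each fixed $\varepsilon$ the bound gives $\limsup_n\frac1n\log sep(n,F,\varepsilon)\le N_2(\varepsilon)\cdot\limsup_n\frac1n\log span(n,\mathcal{F},\varepsilon/2)$; since $h(\mathcal{F})=0$, the spanning $\limsup$ is $0$ for every $\varepsilon$, hence the left side is $0$ for every $\varepsilon$, and therefore $h(F)=0$.

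For part (2), suppose $h(f)>0$. Here I would use the left-hand inequality $sep(n,f,\varepsilon)^{N_1(\varepsilon)}\le sep(n,F,\varepsilon)$. Taking $\frac{1}{n}\log$ and $\limsup_{n\to\infty}$, with $N_1(\varepsilon)$ independent of $n$, gives
\[
N_1(\varepsilon)\cdot\limsup_{n\to\infty}\frac{1}{n}\log sep(n,f,\varepsilon)\le\limsup_{n\to\infty}\frac{1}{n}\log sep(n,F,\varepsilon).
\]
Since $h(f)>0$, for all sufficiently small $\varepsilon$ the quantity $\limsup_n\frac1n\log sep(n,f,\varepsilon)$ is bounded below by a positive constant $c>0$ (because the $\varepsilon$-indexed $\limsup$ increases to $h(f)>0$ as $\varepsilon\to 0$). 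Therefore the right-hand side is at least $N_1(\varepsilon)\cdot c$. Now letting $\varepsilon\to 0$ and using $N_1(\varepsilon)\to+\infty$, I conclude that $h(F)=\lim_{\varepsilon\to0}\limsup_n\frac1n\log sep(n,F,\varepsilon)=+\infty$. The divergence of the exponent $N_1(\varepsilon)$, which was a nuisance in part (1), is exactly what drives the entropy to infinity in part (2); this asymmetry reflects the remark in the introduction that the infinite-entropy phenomenon for functional envelopes is genuine and tied to the one-dimensional structure being exploited in the lower bound construction of Lemma~\ref{Lm:Ineq}.
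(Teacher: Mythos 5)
Your proposal is correct and takes essentially the same approach as the paper's proof: both parts are read off from the two-sided estimate of Lemma~\ref{Lm:Ineq} combined with $h(\mathcal{F})=h(f)$ from Theorem~\ref{Tm:EntrCon}, with part (1) using that $h(\mathcal{F})=0$ forces $\limsup_{n\to\infty}\frac{1}{n}\log span(n,\mathcal{F},\varepsilon/2)=0$ for \emph{every} fixed $\varepsilon$, and part (2) using $N_1(\varepsilon)\to+\infty$ as $\varepsilon\to 0+$. One small remark: your aside in part (1) suggesting that ``$C(\varepsilon)=0$ for each fixed $\varepsilon$'' might be too strong a claim is mistaken --- that statement is exactly right, since the fixed-$\varepsilon$ limsups are nonnegative and nondecreasing as $\varepsilon$ decreases with limit $h(\mathcal{F})=0$, and it is precisely the fact that you (and the paper) end up using.
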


\begin{proof}
Let $h(f)=0$. Let $(\Con(T),\mathcal{F})$ be connected envelope of
$(T,f)$. Then, by Theorem~\ref{Tm:EntrCon}, $h(\mathcal{F})=0$. By
right-hand inequality in Lemma~\ref{Lm:Ineq} we get
$$
\limsup_{n\to\infty} \frac{1}{n} \log
sep\left(n,F,\varepsilon\right)\le
N_2(\varepsilon)\limsup_{n\to\infty} \frac{1}{n} \log
span\left(n,\mathcal{F},\varepsilon/2\right),
$$
for every $\varepsilon>0$. Since $h(\mathcal{F})=0$, the
right-hand side of the last inequality equals $0$ for any
$\varepsilon>0$. So, $h(F)=0$.

Let $h(f)>0$. Then, by left-hand inequality in
Lemma~\ref{Lm:Ineq}, we get
$$
\limsup_{n\to\infty} \frac{1}{n} \log
sep\left(n,F,\varepsilon\right)\ge
N_1(\varepsilon)\limsup_{n\to\infty} \frac{1}{n} \log
sep\left(n,f,\varepsilon\right),
$$
for every $\varepsilon>0$. Since $N_1(\varepsilon)\to+\infty,
\varepsilon\to 0+$, we see that $h(F)\ge Ch(f)$ for any positive
$C$. So, $h(F)=+\infty$.
\end{proof}

\textbf{Acknowledgements.}
%The authors thanks Sergii Kolyada for many useful discussions and suggestions.
The paper was essentially written during the participation of the
author in CODY Autumn in Warsaw '10. The kind hospitality of the
Institute of Mathematics of PAN and the Banach Centre is highly
appreciated.

\bibliographystyle{gtart}

\end{document}